\newtheorem{lemma}{Lemma}
\newtheorem{teo}[lemma]{Theorem}
\newtheorem{prop}[lemma]{Proposition}
\newtheorem{cor}[lemma]{Corollary}
\theoremstyle{definition}
\newtheorem{quest}{Question}
\newtheorem{probl}[quest]{Problem}
\theoremstyle{remark}
\newcommand{\matr} [4] {\big({\tiny\begin{array}{@{}c@{\ }c@{}} #1 & #2 \\ #3 & #4 \\ \end{array}} \big)}
\newcommand{\interior}[1]{{\rm int}(#1)}
\newcommand{\Ivan}{Ivan\v si\' c\ }
\newcommand{\matR} {\ensuremath {\mathbb{R}}}
\newcommand{\matZ} {\ensuremath {\mathbb{Z}}}
\newcommand{\matH} {\ensuremath {\mathbb{H}}}
\newcommand{\matRP} {\ensuremath {\mathbb{RP}}}
\newcommand{\SO} {\ensuremath {{\rm SO}}}
\newcommand{\Vol} {\ensuremath {{\rm Vol}}}
\newcommand{\GL} {\ensuremath {{\rm GL}}}
\author{Bruno Martelli}
\address{Dipartimento di matematica, Largo Pontecorvo 5, 56126 Pisa, Italy}
\email{bruno dot martelli at unipi dot it}
\title{Five tori in $S^4$}
\begin{document}

\begin{abstract}
Ivan\v si\' c proved that there is a link $L$ of five tori in $S^4$ with hyperbolic complement. We describe $L$ explicitly with pictures, study its properties, and discover that $L$ is in many aspects similar to the Borromean rings in $S^3$. In particular the following hold:  
\begin{enumerate}
\item Any two tori in $L$ are unlinked, but three are not; 
\item The complement $M=S^4\setminus L$ is integral arithmetic hyperbolic;
\item The symmetry group of $L$ acts $k$-transitively on its components for all $k$;
\item The double branched covering over $L$ has geometry $\matH^2 \times \matH^2$; 
\item The fundamental group of $M$ has a nice presentation via commutators; 
\item The Alexander ideal has an explicit simple description; 
\item Every class $x\in H^1(M, \matZ)=\matZ^5$ with $x_i\neq 0$ is represented by a perfect circle-valued Morse function; 
\item By longitudinal Dehn surgery along $L$ we get a closed 4-manifold with fundamental group $\matZ^5$;
\item The link $L$ can be put in perfect position.
\end{enumerate}

This leads also to the first descriptions of a cusped hyperbolic 4-manifold as a complement of tori in $\matRP^4$ and as a complement of some explicit Lagrangian tori in the product of two surfaces of genus two.
\end{abstract}

\maketitle

\section*{Introduction}

The aim of this paper is to investigate a link of five tori in $S^4$, first discovered by Ivan\v si\' c \cite{I,I2}. We describe this link explicitly with pictures and study its properties. It turns out that this link is in many aspects similar to the Borromean rings in $S^3$. 
We work in the smooth category: all (sub-)manifolds and maps are smooth, except when explicitly stated otherwise.

\subsection*{The Borromean rings}
The \emph{Borromean rings} $B \subset S^3$ shown in Figure \ref{Borromean:fig} have many well-known remarkable features: 
\begin{enumerate}
\item Any two components of $B$ form a trivial link;
\item The complement $N = S^3 \setminus B$ has an integral arithmetic hyperbolic metric;
\item We may isotope $B$ so that it is preserved by a group of 48 isometries of $S^3$ that acts transitively on its components;
\item The branched double covering of $S^3$ ramified over $B$ is a flat 3-manifold;
\item The group $\pi_1(N)$ is generated by some meridians $a,b,c$ with relators
$$[a,[b,c^{-1}]], \quad [b,[c,a^{-1}]], \quad [c,[a,b^{-1}]].$$
The pair $a$, $[b,c^{-1}]$ generates a peripheral subgroup $\matZ^2$, and the same holds for the other pairs obtained by permuting cyclically the letters $a,b,c$;
\item The Alexander ideal $I$ is generated by monomials
$(t_1-1)^{a_1}(t_2-1)^{a_2}(t_3-1)^{a_3}$
with $\{a_1,a_2,a_3\} = \{1,1,2\}$. Therefore the Alexander polynomial is
$$\Delta = (t_1-1)(t_2-1)(t_3-1);$$
\item A class $\phi = (x_1,x_2,x_3)\in H^1(N,\matZ) = \matZ^3$ fibers $\Longleftrightarrow$ $x_i \neq 0$ for all $i$; 
\item The longitudinal Dehn surgery along $B$ produces the 3-torus;
\item The rings $B$ may be put in prefect position.
\end{enumerate}

\begin{figure}
\centering
\includegraphics[width=4 cm]{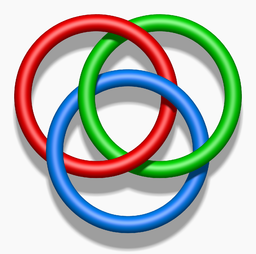}
\caption{The Borromean rings $B$ in $S^3$.}\label{Borromean:fig}
\end{figure}

All these facts are well-known. The integral arithmetic hyperbolic structure arises from the description of the complement as $N= \matH^3/\Gamma$ for some lattice $\Gamma < \SO^+(3,1) \cap \GL(4,\matZ)$, see for instance \cite{RT}. The 48 isometries are constructed in Proposition \ref{B:isom:prop}.
The flat double branched covering is the Hantsche--Wendt flat 3-manifold \cite{Z}. 

The \emph{Alexander ideal} $I\subset\matZ[H^1(N,\matZ)] = \matZ[t_1^{\pm 1}, t_2^{\pm 1}, t_3^{\pm 1}]$ was introduced by Fox \cite{F}, and the \emph{Alexander polynomial} $\Delta$ is its greater common divisor, well-defined up to multiplication with a product of monomials $\pm t_i^{\pm 1}$, see also \cite{McM}. The Thurston ball of $N$ is an octahedron with vertices $\pm e_i$ and all its faces are fibered \cite{Th}, hence $\phi = (x_1,x_2,x_3)$ fibers $\Longleftrightarrow$ it projects in the interior of a face $\Longleftrightarrow$ $x_i \neq 0$ for all $i$. 

We say that a link in $\matR^3$ is in \emph{perfect position} if the height function on each component has only two singular points, one minimum and one maximum; this implies in particular that every component of the link is unknotted. The diagram in Figure \ref{Borromean:fig} displays $B$ in perfect position. 


\subsection*{The Ivan\v si\' c link}
We now define the main protagonist of this paper, that is a particularly interesting link $L\subset S^4$ of five tori. We call this link the \emph{\Ivan link}, since its existence was proved by \Ivan in \cite{I, I2}, although it was not described explicitly there. We describe the \Ivan link $L\subset S^4$ as a \emph{film} in $S^3$. The film shows the intersections of $L$ with the slices $x_5 = t$ of $S^4 \subset \matR^5$ as $t \in (-1,1)$ varies. The five tori in $L$ are coloured in \emph{black, red, blue, green}, and \emph{gray} in the pictures.

The intersection between the link $L$ and the equator $S^3 = \{x_5=0\}$ is shown in Figure \ref{torus_all:fig}. It consists of one black torus, and a link of 8 circles. The black torus is entirely contained in the equator, while the red, blue, green, and gray tori intersect it transversely, each into a pair of circles with the same colour. Note that the black torus decomposes $S^3$ into two solid tori, and each solid torus intersects two components of $L$ into an \emph{untwisted chain link} with four components.

\begin{figure}
\centering
\includegraphics[width=9 cm]{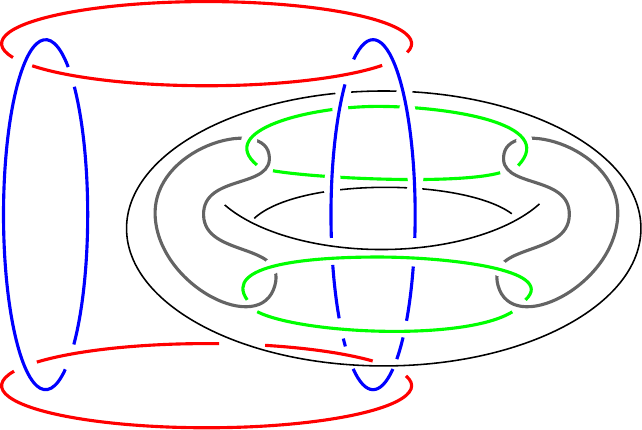}
\caption{The equatorial 3-sphere $S^3 = \{x_5=0\}$ intersects the link $L$ in one black torus and 8 circles.}\label{torus_all:fig}
\end{figure}

Each pair of red, blue, green, or gray circles in Figure \ref{torus_all:fig} forms an unlink, and hence bounds a properly embedded annulus in the equator $S^3 = \{x_5=0\}$. By pushing the interiors of two copies of this annulus in the interiors of the two hemispheres $x_5\geq 0$ and $x_5 \leq 0$ we get two annuli properly embedded in the hemispheres, whose union is a torus, that intersects the equator transversely into the two circles. A fact that is absolutely non-obvious from the figure, is that one can find four \emph{disjoint} properly embedded annuli (in each hemisphere) for the four pairs of red, blue, green and gray circles, and thus get four disjoint tori in $S^4$.

\begin{figure}
\centering
\labellist
\pinlabel $t=0$ at 330 450
\pinlabel $t=\pm 0.2$ at 200 220
\pinlabel $t=\pm 0.4$ at 425 220
\pinlabel $t=\pm 0.6$ at 200 5
\pinlabel $t=\pm 0.8$ at 425 5
\endlabellist
\includegraphics[width=12 cm]{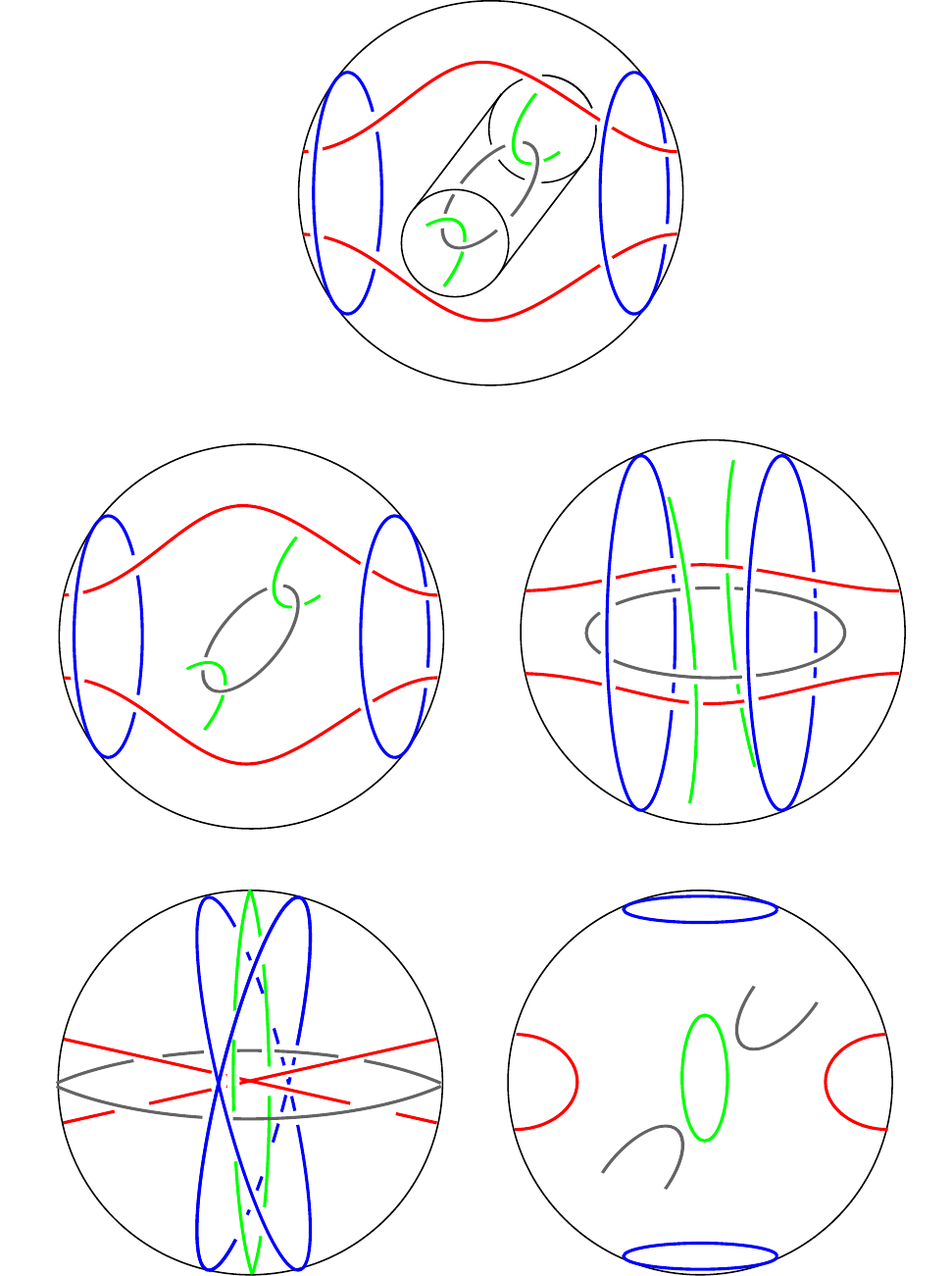}
\caption{The intersection of $L$ with the slice $x_5=t$ is obtained by doubling the corresponding picture in $D^3$ along the boundary $\partial D^3 = S^2$. At the times $t=\pm 0.9$ the picture consists of eight points, obtained by shrinking the eight circles shown at $t=\pm 0.8$.
}\label{tori:fig}
\end{figure}

These four disjoint annuli are described as a film in Figure \ref{tori:fig}. For simplicity, we see Figure \ref{torus_all:fig} as the double of the picture in Figure \ref{tori:fig}-(top). This picture shows a 3-disc $D^3$ that contains a properly embedded black annulus, two blue circles in $\partial D^3$, two properly embedded red and green arcs in $D^3$, and one gray circle in $\interior {D^3}$. The reader may check easily that by doubling this picture along $\partial D^3 = S^2$ we get the picture in Figure \ref{torus_all:fig}, where the black annulus doubles to the black torus, the properly embedded arcs become circles, the gray circle doubles to two gray circles, and the blue circles stay as they are.

Figure \ref{tori:fig} then shows a film in $D^3$ at various times $t\in (-1,1)$. The intersection of $L$ with the slices $x_5=t$ is then obtained by doubling the corresponding picture in $D^3$. The black torus is fully contained in the equator $x_5=0$. Each red, blue, green and gray torus intersects the slice $x_5=t$ transversely, except at the times $t=\pm 0.6$ and $t=\pm 0.9$. The function $x_5$ restricted to each such torus is a Morse function with 
with two minima at $t=-0.9$, two saddles at $t=-0.6$, two more saddles at $t=0.6$, and two maxima at $t=0.9$. 

We note that all the pictures in Figure \ref{tori:fig} are symmetric with respect to reflections along three coordinate planes; the link $L$ is also by construction symmetric with respect to reflections along $\partial D^3$ and $x_5=0$. Therefore $L\subset S^4$ is symmetric with respect to reflections along all the hyperplanes $x_i=0$ for $i=1,\ldots, 5$. All these symmetries preserve the components of $L$: as we will see, the link has also more symmetries that exchange its components.

\subsection*{Properties of the link}
As mentioned at the beginning of this introduction, the Ivan\v si\' c link $L\subset S^4$ shares many remarkable properties with the Borromean rings. The following is the main result of this paper. 

\begin{teo} \label{main:teo}
The Ivan\v si\' c link $L\subset S^4$ has the following properties:
\begin{enumerate}
\item Any two components of $L$ form a trivial link, but three components do not;
\item The complement $M = S^4 \setminus L$ has an integral arithmetic hyperbolic metric; 
\item We may isotope $L$ so that it is preserved by a group of 640 isometries of $S^4$ that acts $k$-transitively on the components of $L$ for all $k=1,\ldots, 5$;
\item The double branched covering of $S^4$ ramified over $L$ has a $\matH^2 \times \matH^2$ metric;
\item The group $\pi_1(M)$ is generated by some meridians $a,b,c,d,e$ with relators
\begin{gather*}
[a,[c,d]], \quad [b,[d,e]], \quad [c,[e,a]], \quad [d,[a,b]], \quad [e,[b,c]], \\
[a,[b^{-1},e^{-1}]], \quad [b,[c^{-1},a^{-1}]], \quad [c,[d^{-1},b^{-1}]],
\quad [d,[e^{-1},c^{-1}]], \quad [e,[a^{-1},d^{-1}]].
\end{gather*}
The triple $a, [c,d], [b^{-1}, e^{-1}]$ generates a peripheral subgroup $\matZ^3$, and the same holds for the triples obtained by permuting cyclically $a,b,c,d,e$;
\item The Alexander ideal $I$ is generated by the monomials
$$(t_1-1)^{a_1}(t_2-1)^{a_2}(t_3-1)^{a_3}(t_4-1)^{a_4}(t_5-1)^{a_5}$$
such that $0\leq a_i \leq 4, a_1+\cdots + a_5 = 8$, at most one $a_i$ vanishes, and  
the condition $(a_{i-1},a_i,a_{i+1}), (a_{i-2},a_i,a_{i+2}) \neq (1,0,1) $ holds for all $i$, with indices interpreted modulo 5. Therefore the Alexander polynomial is $\Delta = 1$;
\item A cohomology class $\phi = (x_1,x_2,x_3,x_4,x_5) \in H^1(M, \matZ) = \matZ^5$ is represented by a perfect circle-valued Morse function $\Longleftrightarrow x_i \neq 0$ for all $i$; 
\item A longitudinal Dehn surgery on $L$ gives a 4-manifold $X$ with $\pi_1(X)=\matZ^5$;
\item The link $L$ can be put in perfect position.
\end{enumerate}
\end{teo}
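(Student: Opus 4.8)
\emph{Overall strategy.} The nine items are not independent: essentially all of them are harvested from one object, namely an explicit realization of the complement $M=S^4\setminus L$ as a hyperbolic manifold $\matH^4/\Gamma$ with $\Gamma<\SO^+(4,1)\cap\GL(5,\matZ)$ together with a combinatorial fundamental domain, in complete analogy with the description $N=\matH^3/\Gamma$, $\Gamma<\SO^+(3,1)\cap\GL(4,\matZ)$ of the Borromean complement. So the plan is: first build this model and match it with the quadric link $L$ defined by \eqref{Ti:eqn}; then read off the algebraic and geometric consequences.

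\emph{Steps 1--2: the model, and items (2), (5), (1).} Starting from Ivan\v si\' c's five-cusped hyperbolic $4$-manifold, I would exhibit $\Gamma$ as an explicit group of integral matrices together with a right-angled ideal polytope $P\subset\matH^4$ whose side-pairings generate $\Gamma$; integrality and arithmeticity of the metric are then immediate, giving (2). From the side-pairing of $P$ one extracts a Wirtinger-type presentation of $\pi_1(M)$ and simplifies it to the meridional presentation with the ten commutator relators of (5), while tracking which boundary words represent the two longitudes of each cusp torus, so that the peripheral $\matZ^3$'s and the $\matZ/5$ symmetry come out on the nose; this proves (5). Matching the five cusp cross-sections of $P$ with the perturbed quadrics $T_i$ is then a direct, symmetry-guided identification. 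For (1), the unlinking of any two components is proved by exhibiting a separating $S^3$ straight from the defining polynomials -- after the small isotopy, $T_i'$ and $T_j'$ lie on opposite sides of a coordinate $3$-sphere and each bounds a solid torus in $S^4$ -- while the linking of three components follows from (5): the complement of any three-component sublink retains a nonabelian $\pi_1$ (a relator $[a,[c,d]]$ survives), which a trivial three-torus link cannot have; a nonvanishing triple Milnor-type invariant gives an alternative.

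\emph{Step 3: items (3), (6), (7), (8).} For (3) one lists the obvious isometries of $S^4$ preserving $L$ -- the $(\matZ/2)^5$ of coordinate sign changes (each $T_i'$ is defined by equations even in every variable), the dihedral $D_5$ of cyclic index permutations with a reflection, and the orthogonal involutions exchanging the two pairs in each quadric $x_{i+1}^2+x_{i+4}^2=x_{i+2}^2+x_{i+3}^2$ -- checks that together they form a group $G<\Isom(S^4)$ of order $640$, and verifies by Mostow rigidity (so $\Isom(M)=N(\Gamma)/\Gamma$) that $G$ is the full symmetry group; the transitivity on components is read off from the matrices. Item (6) is a Fox-calculus computation from the presentation in (5): assemble the Jacobian relation matrix over $\matZ[t_1^{\pm1},\dots,t_5^{\pm1}]$, compute the relevant minors, and identify the ideal they generate with the stated monomial ideal, whence $\Delta=1$. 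For (7), I would invoke a Morse--Novikov criterion to reduce "represented by a perfect circle-valued Morse function" to the vanishing of the appropriate twisted Novikov homology of $M$ in the critical degrees, compute that homology from the explicit CW/polytope structure, and show the vanishing holds exactly when all $x_i\neq 0$ (the case $x_i=0$ forcing an extra critical point through an obstruction localized at the torus $T_i$); this is the $4$-dimensional analogue of the Borromean fibering statement. Item (8) is van Kampen: longitudinal Dehn filling adds, for each cusp, the relations killing the longitudinal generators of the peripheral $\matZ^3$; a short manipulation of the presentation in (5) collapses the quotient to $\matZ^5$.

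\emph{Step 4: items (4) and (9); main obstacle.} For (4), the double cover of $S^4$ branched over $L$ is obtained from the index-two cover of $M$ corresponding to the total-mod-$2$ meridian homomorphism by filling the lifted boundary tori; to see that this closed $4$-manifold $W$ carries an $\matH^2\times\matH^2$ metric I would identify $W$ explicitly as a quotient of $\Sigma\times\Sigma'$ with $\Sigma,\Sigma'$ surfaces of genus two, by exhibiting the two surface-group factors inside $\pi_1(W)$ and checking that each lifted branch torus is the graph of the corresponding pair of isometries -- hence Lagrangian -- which also yields the promised explicit Lagrangian tori in a product of two genus-two surfaces. Item (9) is the explicit geometric construction ("perfect position") that simultaneously powers the realization half of (7): one isotopes $L$ into the concrete position coming from the polytope $P$ and checks that the coordinate circle-valued maps restrict to perfect Morse functions on the complement. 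The main obstacle is Step 1 -- producing the explicit polytope $P$ and the matrix group $\Gamma$ and rigorously matching the resulting link with the quadrics $T_i'$, since every later item is a (sometimes lengthy but routine) computation once this is in hand; the second genuinely nontrivial point is (4), i.e. recognizing the branched cover concretely as an $\matH^2\times\matH^2$ manifold rather than merely establishing that its fundamental group is a reducible lattice.
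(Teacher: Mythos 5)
The high-level philosophy -- build an explicit hyperbolic model for $M$, then harvest the nine items from it -- is indeed the paper's, and several of your reductions (Fox calculus for (6), killing longitudes for (8), citing an external perfectness criterion for (7)) agree with the paper in spirit. But the proposal is missing the paper's central technical device, and several items are handled in ways that either would not work as stated or are genuinely harder than what the paper does.

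The key idea you do not have is the \emph{Dehn filling of the right-angled polytope} $P^4$. The paper colours $P^4$ to get $M^4=M$, then replaces each of the five ideal vertices by a square along the colour-determined direction and smoothens; the resulting abstract right-angled polytope is the \emph{spherical} simplex $\Delta^4 = S^4\cap\{x_1,\dots,x_5\geq 0\}$, and the red squares tile the tori $T_i$. This is what rigorously identifies $S^4\setminus L$ with $M^4$, identifies each $T_i$ (up to perturbation) with the quadric $T_i'$, and yields the explicit film in $S^3$. Your ``direct, symmetry-guided identification'' of the cusp cross-sections with the quadrics hides exactly the work that this construction performs. The same device, run \emph{without} smoothening, turns $P^4$ into a product of two right-angled pentagons $Q^4$; that is how the paper gets a $\matH^2\times\matH^2$ orbifold structure on $(S^4,L)$ with cone angle $\pi$ along $L$, and hence the $\matH^2\times\matH^2$ metric on the branched double cover $W$. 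Your plan for (4) -- exhibit $W$ directly as a quotient of $\Sigma\times\Sigma'$ by finding two surface-group factors in $\pi_1(W)$ and then recognize the branch tori as graphs -- is a much harder, essentially unmotivated path; you would first have to know that $\pi_1(W)$ is a reducible $\matH^2\times\matH^2$ lattice, which is precisely what the polytope argument delivers for free.

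Two smaller but genuine issues. First, in (3) your permutation part is the dihedral group $D_5$ of order $10$ plus unspecified ``orthogonal involutions''; the group that actually preserves the colouring of $P^4$ (and hence $L$) is the Frobenius group of order $20$ generated by $(12345)$ and the $4$-cycle $(2354)$, and the $4$-cycle is not in $D_5$. Combined with the $(\matZ/2)^5$ of sign changes this gives $20\cdot 32=640$; you would need to produce the $4$-cycle explicitly and check it preserves $L$, which is not a reflection. Also, the paper only exhibits $640$ symmetries; it does not claim (and you should not need) that this is the full isometry group, so the Mostow-rigidity verification you propose is superfluous. Second, your treatment of (9) conflates it with (7): perfect position is a statement about a height function $S^4\to\matR$ restricting to a perfect Morse function on each $T_i$, not about circle-valued maps on the complement. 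The paper's proof is direct: take $f=x_1+\cdots+x_5$ and check it is perfect Morse on each quadric $T_i'$, then perturb away from the critical points. Finally, a methodological remark on (5): the paper does not derive the meridional presentation by simplifying a Wirtinger presentation from side-pairings; it runs Bestvina--Brady Morse theory on the dual cube complex of the $2^5$ copies of $P^4$, which produces the presentation with the minimal numbers $5$ and $10$ of generators and relators and makes the $\matZ/5$-symmetric commutator form emerge cleanly. A Wirtinger-style simplification could in principle work, but you give no indication of how to reach the stated symmetric relators from it.
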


We now make some comments on the terminology, and on each property (1)-(9). 

\subsubsection*{ Dehn filling and Dehn surgery} If $W$ is a compact $n$-manifold with boundary consisting of some $(n-1)$-tori, a \emph{Dehn filling} of $W$ is any compact $n$-manifold $X$ obtained by attaching via smooth maps one $T^{n-2} \times D^2$ to some (possibly all) boundary components of $W$. The core $(n-2)$-tori $T^{n-2}\times \{0\}$ will form a link of $(n-2)$-tori in the filled manifold $X$.

Conversely, let the interior of a $n$-manifold $X$ contain some disjoint $(n-2)$-tori with trivial normal bundle. A \emph{drilling} of $X$ consists of the removal of some small open tubular neighbourhoods of these $(n-2)$-tori. Finally, a \emph{Dehn surgery} along these $(n-2)$-tori consists of a drilling followed by a Dehn filling of the resulting boundary components. If $X$ was closed, the resulting surgered manifold also is.

\subsubsection*{{\rm(1)} Trivial sublink}
By definition a link of $k$ tori in $S^4$ is \emph{trivial} if it is isotopic to a split union of standard tori in some smooth 3-disc contained in $S^4$. The fundamental group of the complement of a trivial link of $k$ tori is easily seen to be the free group with $k$ generators. 

It is easily deduced from the film of Figures \ref{torus_all:fig} and \ref{tori:fig} that any two tori form a trivial link. In fact, the 2-transitivity in property (3) implies that it suffices to check this on a single pair of tori, for instance the black and red ones.

We note that this is also coherent with the presentation for $\pi_1(M)$ given in (5). We can recover the fundamental group of the complement of any collection of tori in $L$ by killing the meridians of the other tori: we can see easily that if we kill three meridians, say $a,b,c$, all the relators become trivial and we are left with a free group in the remaining generators $d,e$.

On the other hand, we can also use this method to check that three tori in $L$ do \emph{not} form a trivial link: using 3-transitivity from (3) we may suppose that the three tori have meridians $c,d,e$; if we kill the remaining meridians $a,b$, we obtain the group $\langle c,d,e | [d,[e^{-1},c^{-1}] \rangle$ that is not free. 

We have deduced property (1) from (3), (5) and by looking at Figures \ref{torus_all:fig} and \ref{tori:fig}.

\subsubsection*{{\rm (2)} Integral arithmetic hyperbolic complement}
This is in fact the origin of the Ivan\v si\' c link. \Ivan proved in \cite{I, I2} that the
orientable double cover of the non-orientable hyperbolic 4-manifold 1011 from the Ratcliffe -- Tschantz census \cite{RT} is a link complement in $S^4$, although the link was not explicitly determined there (the proof
consisted in showing that some Dehn filling of this manifold is diffeomorphic to $S^4$, and it is not easy in general -- not even in dimension 3 -- to recover an explicit picture of a link from its exterior: see \cite{DRO} for a recent breakthrough in that direction).
We will prove in Section \ref{RT:subsection} that this orientable double cover is in fact diffeomorphic to $M = S^4 \setminus L$.

All the manifolds from the Ratcliffe -- Tschantz census are \emph{integral arithmetic}, that is they are obtained as a quotient $\matH^4/\Gamma$ for some lattice $\Gamma < \SO^+(4,1) \cap \GL(5,\matZ)$, and they have $\chi =1$, see \cite{RT}. Thus the double cover $M$ is also integral arithmetic and has $\chi(M)=2$. By the Gauss -- Bonnet formula the volume is
$$\Vol(M) = \frac 43 \pi^2 \chi(M) = \frac 83 \pi^2.$$

Since $L$ consists of five tori, each with trivial normal bundle, the hyperbolic manifold $M$ has five cusps, each with a 3-torus cusp section. All the manifolds in \cite{RT} decompose into a single ideal 24-cell, which in turns decomposes into 16 copies of the right-angled polytope $P^4$. Therefore the double cover $M$ decomposes into two ideal 24-cells, and further into 32 copies of $P^4$. We will describe here a convenient decomposition of $M$ into these 32 copies of $P^4$ obtained by \emph{colouring} its facets in Section \ref{4:subsection}. We will deduce most of Theorem \ref{main:teo} from this description. In fact we will also prove that the manifold $M$ coincides with the manifold $M^4$ from \cite{IMM} and with the manifold $W$ from \cite{BM}.  

\subsubsection*{{\rm (3)} The 640 isometries}
Consider the isometries
\begin{equation*} 
(x_1, x_2, x_3, x_4, x_5) \longmapsto (\pm x_{\sigma(1)}, \pm x_{\sigma(2)}, \pm x_{\sigma(3)}, \pm x_{\sigma(4)}, \pm x_{\sigma(5)})
\end{equation*}
of $S^4$ where $\sigma\in S_5$ is any permutation contained in the order 20 group $H<S_5$ generated by the cycles $(12345)$ and $(2354)$, and signs $\pm$ are arbitrary. We get a group $G$ of $20 \times 32 = 640$ isometries and we will prove
in Section \ref{symmetries:subsection} that they preserve (a link isotopic to) the \Ivan link $L$ and act $k$-transitively (that is, transitively on the subsets of $k$ elements) on the components of $L$, for every $k$.

We also note that every isometry restricts to an isometry of the hyperbolic manifold $M=S^4 \setminus L$. This holds in general by Mostow -- Prasad rigidity, and it will be evident by construction in Section \ref{symmetries:subsection}. In fact each such isometry will preserve the tessellation of $M$ into 32 copies of the right-angled polytope $P^4$.

\subsubsection*{{\rm (4)} The double branched covering}
For every (not necessarily connected) closed orientable $(n-2)$-manifold $\Sigma \subset S^n$ there is a well-defined double branched covering over $S^n$ ramified along $\Sigma$. 

The double branched covering over $S^4$ ramified over the \Ivan link $L$ is a closed oriented 4-manifold $W$ with $\chi(W)=4$, and we could not expect it to be a flat 4-manifold as with the Borromean rings, since flat manifolds have $\chi = 0$. We are however  lucky enough to find that $W$ admits a geometry of type $\matH^2 \times \matH^2$. The manifold $W$ is not a product of surfaces because $b_1(W)=0$, see Corollary \ref{W:not:cor}, but it is finitely covered by a product of surfaces (likewise, the Hantsche -- Wendt flat 3-manifold that double covers $S^3$ branched along the Borromean rings $B$ is not a 3-torus and has $b_1=0$, but it is finitely covered by a 3-torus). This is proved in Section \ref{branched:subsection}.

We will in fact equip the pair $(S^4,L)$ with a geometric $\matH^2\times \matH^2$ orbifold structure that is singular along $L$ with cone angle $\pi$. The double branched cover $W$ will inherit automatically the structure of a $\matH^2\times \matH^2$ manifold.

This remarkable and quite unexpected fact implies in particular the following.

\begin{cor}
It is possible to obtain a closed manifold $W$ with geometry $\matH^2 \times \matH^2$ by Dehn filling the cusps of some manifold $N$ with geometry $\matH^4$. 
\end{cor}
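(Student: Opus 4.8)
The plan is to exhibit the pair $(N,W)$ directly from the constructions in Theorem \ref{main:teo}. Take $N = M = S^4 \setminus L$, the integral arithmetic hyperbolic $\matH^4$-manifold of item (2), and take $W$ to be the double covering of $S^4$ branched over $L$, which by item (4) carries a geometry of type $\matH^2 \times \matH^2$. The only thing left to argue is that $W$ is obtained from $N$ by Dehn filling its five cusps. This is a general topological fact about double branched covers along a codimension-two link, specialized to our situation: removing a tubular neighborhood of $L$ from $S^4$ and passing to the double branched cover is the same as taking the double cover of $M=S^4\setminus L$ dual to the mod-$2$ reduction of the total linking class, and then the branch locus in $W$ lying over $L$ is reglued in; so $W$ minus (the preimage of) $L$ is a double cover $\widehat M$ of $M$, and $W$ is a Dehn filling of $\widehat M$.

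So the argument proceeds in three steps. First, fix $N = M$; by item (2) it is a cusped $\matH^4$-manifold with five torus cusps, hence Dehn filling of its cusps makes sense (each cusp is diffeomorphic to $T^3 \times [0,\infty)$ and a filling glues in $D^2 \times T^2$ along a chosen slope on $T^3$). Second, identify the filled manifold: I would observe that the double branched cover $W \to S^4$ restricts, over $M = S^4 \setminus L$, to the double \emph{unbranched} cover $\widehat M \to M$ determined by the homomorphism $\pi_1(M) \to \matZ/2$ sending every meridian $a,b,c,d,e$ to $1$ (this is exactly the homomorphism whose kernel the branched cover is built from), and that $W$ is recovered from $\widehat M$ by filling each of its cusps along the slope that bounds a disc in the branch locus — concretely, each cusp $T^3$ of $\widehat M$ double-covers a cusp $T^3$ of $M$ and the filling slope is the one covering (twice) the meridian direction. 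Hence $W$ is a Dehn filling of $\widehat M$. Third, reconcile the two descriptions of $N$: I would note $\widehat M$ is itself a finite-volume cusped $\matH^4$-manifold (an index-two cover of $M$), so the corollary's $N$ can be taken to be $\widehat M$ — or, if one insists on filling $M$ itself rather than a cover, observe that the five filling slopes on $M$'s cusps, pulled back along $\widehat M \to M$, are exactly the branch slopes, so that Dehn filling $M$ along those slopes and then passing to the branched double cover, or equivalently Dehn filling $\widehat M$, yields $W$ with its $\matH^2 \times \matH^2$ structure.

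The main obstacle — really the only non-formal point — is the precise bookkeeping of \emph{which} cusp and \emph{which} slope: one must check that the mod-$2$ cover $\widehat M \to M$ has connected cusps that double-cover the cusps of $M$ (as opposed to splitting into two), and that the branch-locus filling slope on each cusp torus of $\widehat M$ is primitive, so that the filling is a genuine Dehn filling and not something degenerate. This follows from the explicit presentation of $\pi_1(M)$ in item (5): the peripheral $\matZ^3$ associated to a component of $L$ is generated by a meridian $a$ and two longitudinal classes $[c,d]$ and $[b^{-1},e^{-1}]$, the $\matZ/2$ map is nontrivial on $a$ and trivial on the longitudes, so the peripheral subgroup of $\widehat M$ is the index-two sublattice $\langle a^2, [c,d], [b^{-1},e^{-1}]\rangle \cong \matZ^3$, the cusp stays connected, and $a^2$ (the class covering the meridian) is primitive in it — exactly the slope along which the branch disc is attached. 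With this in hand the geometry is automatic: $W$ carries the $\matH^2 \times \matH^2$ orbifold-turned-manifold structure of item (4), and it is presented as a Dehn filling of the $\matH^4$-manifold $\widehat M$, which is what the corollary asserts.
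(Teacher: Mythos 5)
Your argument is correct and follows the paper's own route: the paper simply observes that $W$ is a Dehn filling of $N=\tilde M$, the degree-two cover of the cusped hyperbolic manifold $M=S^4\setminus L$, which is exactly your $\widehat M$ (you start by proposing $N=M$ but then correct this in your "third step"). The extra bookkeeping you supply — that the $\matZ/2$ cover is the one killing the mod-$2$ meridian class, that each cusp of $\widehat M$ connectedly double-covers a cusp of $M$ because the meridian $a$ maps nontrivially, and that the filling slope $a^2$ is a primitive element of the peripheral lattice $\langle a^2,[c,d],[b^{-1},e^{-1}]\rangle$ — is implicit in the paper's one-line proof but is a genuine and worthwhile check; your version is a more careful unpacking of the same idea rather than a different approach.
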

\begin{proof}
The double branched covering $W$ over $L$ is a Dehn filling of a degree-two covering $N$ of the cusped hyperbolic manifold $M=S^4 \setminus L$. 
\end{proof}

This was previously unknown. Note that $\chi(W) = \chi(N)$, and this new phenomenon is (of course) coherent with the fact, proved in \cite{FM}, that the simplicial volume is non-increasing under Dehn filling: indeed we have
\begin{align*}
\|N\|  & = \frac{\Vol(N)}{v_4} = \frac {4\pi^2}{3v_4}\chi(N) \sim 48.94 \cdot \chi(N),\\
\|W\| & = 6 \cdot \chi(W) = 6 \cdot \chi(N).
\end{align*}
where $v_4 \sim 0.26889 \ldots$ is the volume of the regular ideal hyperbolic 4-simplex \cite{HM}, and the second equality was proved in \cite{B}. We get $\|W\| < \|N\|$ as expected.

We also note again that it is \emph{not} possible to obtain a flat 4-manifold by Dehn filling a hyperbolic manifold since they have different Euler characteristic. 

\subsubsection*{{\rm (5)} Fundamental group}
The generators $a,b,c,d,e$ are some \emph{meridians}, that is some curves that encircle the five tori in $L$. The given presentation is calculated in Section \ref{pi:subsection}.
We note that the relators are remarkably simple and symmetric; similarly as with the Borromean rings, the elements 
$$a, \quad [c,d], \quad [b^{-1}, e^{-1}]$$ 
represent a basis of the peripheral group $\matZ^3$ that is the boundary of a regular neighbourhood of one torus of the link. The same holds for each torus after permuting cyclically the generators $a,b,c,d,e$.
We note that by Alexander duality 
$$b_1(M) = 5, \quad b_2(M) = 10, \quad b_3(M) = 4.$$

We deduce that the presentation of $\pi_1(M)$ has the minimum possible number 5 of generators and 10 of relators. 


\subsubsection*{{\rm (6)} The Alexander ideal}
The Alexander ideal $I$, defined by Fox \cite{F}, lies in 
$$\matZ[H^1(M,\matZ)] = \matZ[t_1^{\pm 1}, \ldots, t_5^{\pm 1}].$$ 
It has been determined here from the presentation of $\pi_1(M)$ using Fox calculus \cite{F} and a computer program. We explain this in Section \ref{Fox:subsection}.

The Alexander polynomial $\Delta$ is the greatest common divisor of the Alexander ideal, well-defined up to multiplication with a product of monomials $\pm t_i^{\pm 1}$, see also \cite{McM}. When $M$ is a 3-manifold this is a famous and interesting polynomial, while for more general spaces it may happen that the Alexander polynomial is just $\Delta=1$, and this is what we get here. 

Even when the Alexander polynomial $\Delta$ carries no useful information, the Alexander ideal may be used to compute the first Betti number of the infinite cyclic coverings, as explained in \cite{F, McM}. We will employ it to prove Theorem \ref{Betti:teo} below. 


\subsubsection*{{\rm (7)} Perfect circle-valued Morse functions}
Every primitive integral cohomology class $\phi = (x_1,x_2,x_3,x_4,x_5) \in H^1(M,\matZ) = \matZ^5$ may be interpreted as a surjective homomorphism $\phi \colon \pi_1(M) \to \matZ$ and it is natural to wonder whether $\phi$ can be represented by a Morse function $M\to S^1$ with the minimum number $\chi(M) = 2$ of singular points (necessarily of index 2, see \cite{BM}). Such a Morse function is called \emph{perfect}, and it should be interpreted as the natural generalisation of a fibration when $\chi(M) \neq 0$.

As already mentioned, the manifold $M$ coincides with the manifold $W$ considered in \cite{BM}, so we already know when $\phi$ is represented by a perfect circle-valued Morse function: by \cite[Theorem 4]{BM} this holds precisely when $x_i \neq 0$ for all $i$, so there is nothing to prove here.

\subsubsection*{{\rm (8)} Longitudinal Dehn surgery}
Recall that a Dehn surgery along $L$ consists of removing the tubular neighbourhood $\nu = D^2 \times T$ of each torus $T$ and gluing it back via some diffeomorphism $\varphi$ of $\partial \nu$. 
We say that the Dehn surgery is \emph{longitudinal} if $\varphi$ sends the meridian $S^1 \times p$ to a curve in $\partial \nu$ that is null-homologous in the complement $S^4 \setminus T$. As opposite to dimension three, there are infinitely many non-homotopic curves that fulfill this requirement, and there does not seem to be a canonical way to choose one. We get infinitely many potentially different longitudinal Dehn surgeries along each component of $L$.

Every longitudinal Dehn surgery along $L$ produces a closed 4-manifold $X$ with $H_1(X) = \matZ^5$.
We prove in Section \ref{Dehn:subsection} that there is a particular longitudinal Dehn surgery where we also get $\pi_1(X) = \matZ^5$. As opposite to the Borromean rings, such a manifold $X$ cannot be flat, and neither it can be aspherical, because its fundamental group has cohomological dimension 5.

We recall from \cite{FM0} that every sufficiently complicated Dehn surgery on a link of tori (in any closed 4-manifold) with hyperbolic complement gives rise to a manifold that is locally CAT(0), and hence in particular aspherical: this implies that most longitudinal surgeries on $L$ yield aspherical manifolds. Therefore the surgery yielding $\pi_1(X^5) = \matZ^5$ should be interpreted as exceptional.

\subsubsection*{{\rm (9)} Perfect position}
We say that a submanifold $\Sigma\subset S^n$ is in \emph{perfect position} if the height function $x_{n+1}\colon S^n \to [-1,1]$ restricts to a perfect Morse function on $\Sigma$, that is one with exactly $b_i(\Sigma)$ points of index $i$ for all $i$. The Borromean rings $B\subset S^3$ in Figure \ref{Borromean:fig} are in perfect position. 

We prove in Section \ref{perfect:subsection} that the \Ivan link $L\subset S^4$ can be put in perfect position. This means that the height function has, on each torus in $L$, one minimum, two saddle points, and one maximum. We note that the pictures in Figures \ref{torus_all:fig} and \ref{tori:fig} do \emph{not} display $L$ in perfect position, since the height function on each torus has the double of the critical points requested for every index. We prove that $L$ can be put in perfect position, but we were not able to show this directly with a picture. 

\vspace{.4 cm}
We end the Introduction with some further comments, analysing some consequences of our work, and asking some natural open questions.

\subsection*{Betti numbers of the infinite cyclic coverings}
Let $M=S^4\setminus L$ be the complement of the \Ivan link $L\subset S^4$.
Every primitive integral cohomology class $\phi = (x_1,x_2,x_3,x_4,x_5) \in H^1(M,\matZ) = \matZ^5$ determines a surjective homomorphism $\phi\colon \pi_1(M) \to \matZ$ and hence an infinite cyclic cover $\tilde M$ determined by $\ker \phi$. It is natural to wonder how the Betti numbers $b_i(\tilde M)$ are depending on $\phi$.

\begin{teo} \label{Betti:teo}
The following hold:
\begin{enumerate}
\item $b_1(\tilde M) = 8 + d(x_1,x_2,x_3,x_4,x_5)$ for some computable finite $d\geq 0$ if at most one $x_i$ vanishes, and $b_1(\tilde M) = \infty $ otherwise; 
\item $b_2(\tilde M) = \infty$,
\item $b_3(\tilde M) = 0$ if $x_i \neq 0$ for all $i$ and $b_3(\tilde M) = \infty$ otherwise.
\end{enumerate}
The function $d$ is quite irregular. In particular when $x_i \neq 0$ for all $i$ we have:
\begin{itemize}
\item $d=0$ if and only if $(x_1,x_2,x_3,x_4,x_5)$ are pairwise coprime;
\item $d=p-1$ if $(x_1,x_2,x_3,x_4,x_5) = (p,p,1,1,1)$ for some positive prime $p$.
\end{itemize}
\end{teo}

This is proved in Section \ref{infinite:subsection}. To determine $b_1(\tilde M)$ we use a result of Milnor \cite{McM, Mil} that allows us to derive this number from the generators of the Alexander ideal, found in Theorem \ref{main:teo}-(6). In Section \ref{infinite:subsection} we also prove that in fact $b_2(\tilde M) = \infty$ for every infinite cyclic cover $\tilde M$ of any finite-volume hyperbolic 4-manifold $M$. 

The behaviour of $b_1(\tilde M)$ is quite irregular, in stark contrast with the Borromean rings complement, and in fact with any 3-manifold. It is an observation of McMullen \cite{McM} that on any compact orientable 3-manifold $N$ the function $\phi \mapsto b_1(\ker \phi)$ is \emph{affine} on primitive classes inside the cone of every open face of the Alexander polyhedron: for instance for the Borromean rings complement all the faces are fibered and when $x_i \neq 0$ for all $i$ we have 
$$b_1(x_1,x_2,x_3) = 1+|x_1|+|x_2|+|x_3|.$$ 

\subsection*{No essential spheres, tori, discs, annuli}
The hyperbolicity of the complement $M=S^4\setminus L$ of the \Ivan link $L$ has some immediate consequence on its topology, that are familiar in dimension 3 but which in fact hold in any dimension. First, the manifold $M$ is aspherical. Second, it does not contain any essential \emph{sphere}, \emph{torus}, \emph{disc}, or \emph{annulus}. That is, any map $f\colon X \to \bar M$ from one such surface $X$ to the compact version $\bar M$ of $M$  such that $f^{-1}(\partial \bar M) = \partial X$ is either not $\pi_1$-injective, or homotopic (rel to the boundary) to a map in a collar of $\partial \bar M$. (Here $\bar M$ is the compact manifold with boundary whose interior is $M$.)

These conditions are quite restrictive, even more in dimension $n=4$ since the boundary 3-tori contain many closed curves that could potentially bound essential discs or annuli. The black torus in Figure \ref{torus_all:fig} separates the equator into two solid tori, each containing a \emph{chain link} with four components: the chain links are there to prevent the black torus to bound an essential disc in the solid torus, or to cobound an essential annulus with some other torus. In fact the complement of the chain link in each solid torus is a hyperbolic 3-manifold with five cusps totally geodesically embedded in the hyperbolic 4-manifold $M=S^4\setminus L$ (because it is a fixed points set of the symmetry $(x_1,x_2,x_3,x_4,x_5) \mapsto (x_1,x_2,x_3,x_4,-x_5)$, which is an isometry of $M$). If we remove one of the five tori we can see immediately from the pictures the appearance of some essential discs or annuli that will prevent the complement of the remaining four tori from being hyperbolic.

We have just remarked that $M$ contains some cusped totally geodesic hyperbolic 3-manifold; we note that by a result of Chu and Reid it contains no closed one \cite{CR}.

\subsection*{Hyperbolic links of tori in other closed 4-manifolds}
A link of tori in a closed 4-manifold $X^4$ is \emph{hyperbolic} if its complement admits a complete finite-volume hyperbolic structure. In dimension 3 every closed 3-manifold contains many hyperbolic links, but in dimension 4 hyperbolic links of tori should be in principle more sporadic: the complement of a hyperbolic link of tori in $X^4$ has the same (necessarily positive) Euler characteristic of $X^4$, and there are only finitely many cusped hyperbolic 4-manifolds with any fixed positive Euler characteristic \cite{Wang}. So a fixed closed 4-manifold $X^4$ contains only finitely many cusped hyperbolic 4-manifolds as link complements (this does not imply that it contains finitely many hyperbolic links up to isotopy, since infinitely many non-isotopic links may share the same complement). 

In particular we do not know which closed 4-manifolds with positive Euler characteristic contain at least one hyperbolic link. Among the few examples known we have $S^4$ \cite{I, I2} and some manifolds homeomorphic to $\#_{2k} (S^2 \times S^2)$ \cite{S}.
The techniques used in this paper allows us to find two more natural examples.

\begin{teo} \label{Lagrangian:teo}
The real projective space $\matRP^4$ and the product $\Sigma \times \Sigma$ of two surfaces $\Sigma$ of genus two contain each a hyperbolic link of tori.

The hyperbolic link in $\matRP^4$ is the image of $L\subset S^4$ along the antipodal map. It consists of 5 tori and its complement is the non-orientable hyperbolic manifold 1011 from the Ratcliffe -- Tschantz census \cite{RT}. The cusp sections are flat torus bundle over $S^1$ with monodromy $\matr {-1}001$.

The hyperbolic link in $\Sigma \times \Sigma$ consists of the following 9 Lagrangian tori:
$$\alpha \times \alpha, \quad \beta \times \gamma_i, \quad \gamma_i \times \epsilon, \quad \delta_i \times \beta, \quad \epsilon \times \delta_i$$
where the curves $\alpha, \beta, \gamma_1, \gamma_2, \delta_1, \delta_2, \epsilon$ are in Figure \ref{genus_two_greek:fig}, and $i\in \{1,2\}$ varies. 
\end{teo}

These are the first examples of hyperbolic links of tori in $\matRP^4$ and in a product of surfaces. The theorem is proved in Sections \ref{RT:subsection} and \ref{Lagrangian:subsection}.

\begin{figure}
\centering
\includegraphics[width=8 cm]{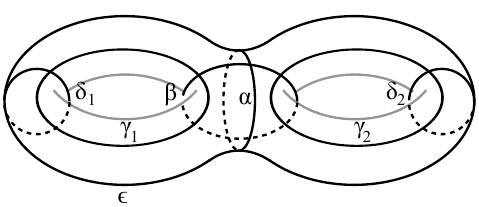}
\caption{A genus two surface and the simple closed curves $\alpha, \beta, \gamma_1, \gamma_2, \delta_1, \delta_2, \epsilon$. Note that these curves cut the surface into 8 pentagons.}
\label{genus_two_greek:fig}
\end{figure}

\subsection*{Further questions}
We start with a natural problem.

\begin{probl}
Produce more examples of links of tori in $S^4$ with hyperbolic complement. Is there a link that consists of a single torus? 
\end{probl}

Ivan\v si\' c, Ratcliffe and Tschantz \cite{IRT} have produced more examples of hyperbolic links of tori and Klein bottles in a smooth 4-manifold that is homeomorphic to $S^4$, and most likely diffeomorphic to it. More generally, the following seems open.

\begin{quest}
Is there a knotted torus in $S^4$ with aspherical complement?
\end{quest}

We recall from \cite{FM0} that a sufficiently complicated Dehn surgery along a link with hyperbolic complement is locally CAT(0), and hence in particular aspherical with universal cover $\matR^4$.

\begin{probl}
Classify all the \emph{exceptional} Dehn surgeries on the \Ivan link $L\subset S^4$, that is those that do not produce a CAT(0) space. This has been done for the Borromean rings in \cite{Mar7}.
\end{probl}

The description of a hyperbolic manifold as a complement of some Lagrangian tori in a product of surfaces is explicit in Theorem \ref{Lagrangian:teo}, and it is natural to ask whether it can be generalised.

\begin{probl}
Determine which collections of disjoint Lagrangian tori in a product of surfaces has a hyperbolic complement.
\end{probl}

Not much seems known in dimension $n>4$. The paper \cite{IMM} contains a notable sequence of cusped hyperbolic manifolds $M^3,\ldots, M^8$ of dimension $n=3,\ldots, 8$, where $M^3$ and $M^4$ are precisely the complements of the Borromean rings $B$ in $S^3$ and of the \Ivan link $L$ in $S^4$. It is tempting to pursue our investigation with $M^5$, which has also the delightful property of fibering over the circle \cite{IMM2}. However, the Betti numbers of $M^5, \ldots, M^8$ calculated in \cite{IMM} show that they cannot be link complements in $S^5, \ldots, S^8$, because they would contradict Alexander duality.

\begin{quest}
Is there a cusped hyperbolic 5-manifold that is a complement of 3-tori in $S^5$? More generally, is there a cusped hyperbolic $n$-manifold with $n\geq 5$ that is a complement of $(n-2)$-tori in a closed $n$-manifold of some simple kind?
\end{quest}

The following question also seems open.

\begin{quest}
Is there a link of $(n-2)$ tori in $S^n$ for some $n>4$ with aspherical complement?
\end{quest}

Note that the complement of an unknotted $S^{n-2} \subset S^n$ is homotopically equivalent to $S^1$ and hence aspherical, while the complement of an unknotted $T^{n-2} \subset S^n$ is not aspherical when $n>3$, since its complement has $\pi_1=\matZ$ and $b_2 \neq 0$.

\subsection*{Structure of the paper}
The rest of the paper is devoted to the proofs of Theorems \ref{main:teo}, \ref{Betti:teo}, and \ref{Lagrangian:teo}. In Section \ref{structure:section} we prove Theorem \ref{main:teo}-(2), first by constructing $M$, then by showing that it has a Dehn filling diffeomorphic to $S^4$ with core tori $L$. In Section \ref{geometric:section}
we prove Theorem \ref{main:teo}-(3, 4, 9). In Section \ref{fundamental:section}
we prove Theorem \ref{main:teo}-(5, 6, 8), then we use the Alexander ideal to prove Theorem \ref{Betti:teo}. Finally, in Section \ref{link:section}
we prove Theorem \ref{Lagrangian:teo} and show that $M$ is indeed the double cover of the non-orientable hyperbolic 4-manifold 1011 from the Ratcliffe -- Tschantz census \cite{RT} considered by \Ivan \cite{I, I2}.

\subsection*{Acknowledgements}
We warmly thank Alan Reid for discussions and for suggesting that the manifold $M^4$ in \cite{IMM} might coincide with the link complement in $S^4$ determined by Ivan\v si\' c. This observation has motivated this research.

\section{The hyperbolic structure} \label{structure:section}
We show here that the complement $M=S^4\setminus L$ of the \Ivan link is hyperbolic, thus proving Theorem \ref{main:teo}-(2). In fact, 
we prove that the complement $M$ is diffeomorphic to the cusped hyperbolic manifold defined in \cite{BM, IMM}
respectively with the names $W$ and $M^4$.

The hyperbolic manifold $M^4$ is constructed in \cite{IMM} by \emph{colouring} a right-angled hyperbolic polytope $P^4$. Our strategy is to \emph{Dehn fill} $P^4$ as explained in \cite{MR}. This operation will determine a Dehn filling of $M^4$, it will allow us to prove that this Dehn filling is diffeomorphic to $S^4$, and to show that its core tori form the \Ivan link $L$. Therefore we will deduce that $M^4 \cong M = S^4\setminus L$.




\subsection{The Borromean rings} \label{3:subsection}
As a warm up, we first consider the Borromean rings; much of the 4-dimensional construction will be similar, and this also explains the analogies between the Borromean rings $B\subset S^3$ and the \Ivan link $L\subset S^4$.

\subsubsection{Colouring}
We start with the right-angled polyhedron $P^3\subset \matH^3$ shown in Figure \ref{bipyramid:fig}. The polyhedron is a bipyramid with three ideal vertices and two real ones, and is combinatorially dual to a triangular prism. A \emph{colouring} on a polyhedron is the assignment of a number (called a \emph{colour}) at each face, so that any two faces that meet along an edge have different colours. We colour the faces of $P^3$ with the colours $1,2,3$ as shown on the dual graph in Figure \ref{bipyramid:fig}-(right). This is the unique colouring of $P^3$ with 3 colors, up to isometries of $P^3$ and permutations of the colors. 

As explained in \cite{IMM}, this colouring produces a hyperbolic 3-manifold $M^3$ tessellated into $2^3=8$ copies of $P^3$, with 3 cusps. The procedure to construct $M^3$ goes as follows: we take $2^3$ identical copies $P_v^3$ of $P^3$ parametrised by $v \in (\matZ/2\matZ)^3$, and we identify every face of $P_v^3$ coloured with $i$ with the corresponding face of $P_{v+e_i}^3$ via the identity map. We refer to \cite{IMM} for more details.

\begin{figure}
\centering
\includegraphics[width=5 cm]{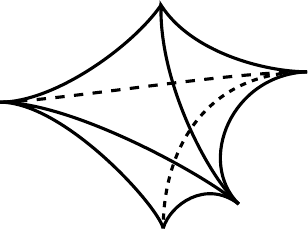}
\quad
\includegraphics[width=4 cm]{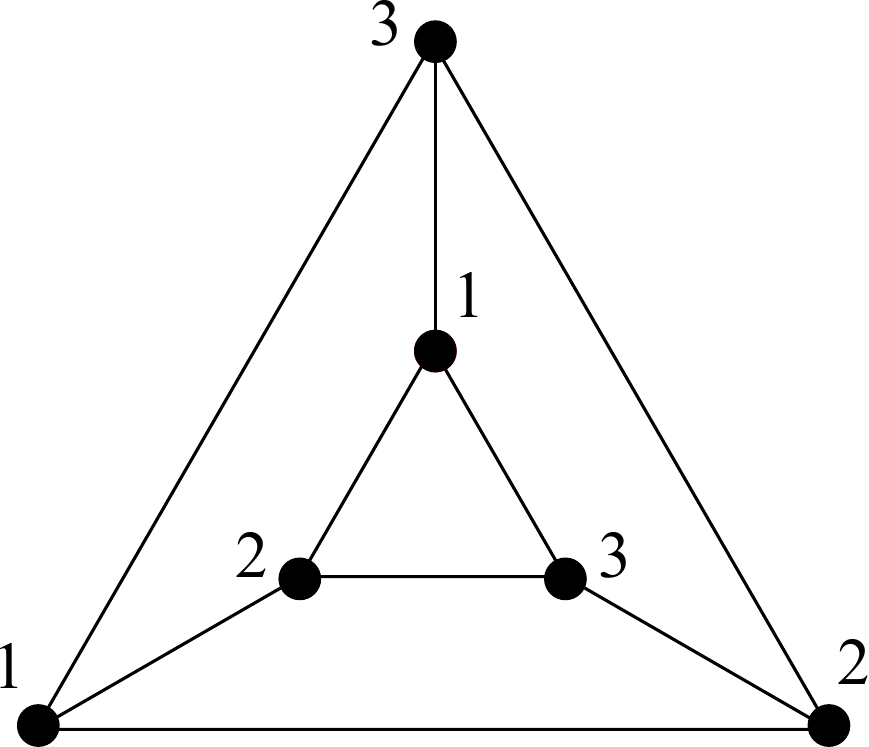}
\caption{The right-angled bipyramid $P^3\subset \matH^3$ has two real vertices with valence 3 and three ideal vertices with valence 4 (left). We colour its faces with $1,2,3$ as shown in the dual graph, that is the 1-skeleton of a triangular prism (right).}\label{bipyramid:fig}
\end{figure}

\subsubsection{Dehn filling} \label{Dehn:subsubsection}
We show here that $M^3$ is actually the Borromean rings complement. To this purpose
we \emph{Dehn fill} the polyhedron $P^3$ as explained in \cite{MR}. The Dehn filling procedure consists of substituting each ideal vertex of $P^3$ with a segment: it can be seen as a two-step operation in which we first truncate the ideal vertex and then shrink the resulting new square to a segment, as in Figure \ref{Dehn_fill:fig}. 

\begin{figure}
\centering
\includegraphics[width=11 cm]{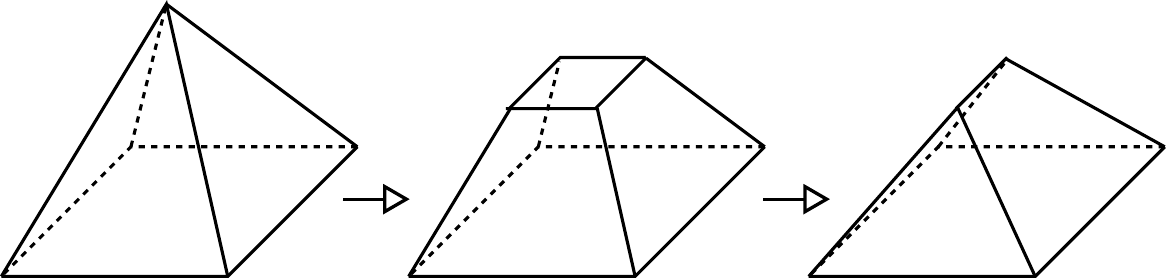}
\caption{Dehn filling an ideal vertex: we first truncate and then shrink the square to a segment (in general dimension $n$, we shrink the $(n-1)$-cube to a $(n-2)$-cube).}
\label{Dehn_fill:fig}
\end{figure}

At every ideal vertex we may perform two different Dehn fillings, depending on which direction we choose to collapse the square onto a segment. Since $P^3$ has three ideal vertices we get $2^3=8$ different choices overall. Two choices produce a combinatorial cube, shown in Figure \ref{filledP3:fig}, while the six other choices produce different kinds of combinatorial hexahedra.

\begin{figure}
\centering
\includegraphics[width=12 cm]{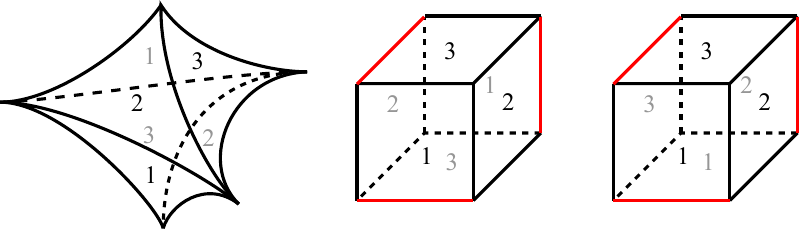}
\caption{The coloured $P^3$ (left) may be Dehn filled in 8 different ways, producing different coloured polyhedra. A Dehn filling substitutes each ideal vertex with a new edge, drawn here in red. Two choices produce a cube (center and right), while the other six choices produce different kinds of hexahedra. The colours on the back faces are drawn in gray. Note that the two resulting cubes inherit substantially different colourings.}
\label{filledP3:fig}
\end{figure}

The Dehn filled polyhedron inherits a colouring from that of $P^3$. However, it may be that the inherited colouring is not strictly speaking a colouring, because two faces that have a red edge in common may share the same colour: this is the case in Figure \ref{filledP3:fig}-(right). 
This apparently annoying phenomenon may occur since some faces that were not adjacent along an edge in $P^3$ may become so after the Dehn filling. In that case we simply remove the red edge separating the two similarly coloured faces (we call this operation a \emph{smoothing} of an edge) and unify the two faces into a unique face. Smoothing solves the colouring problem, at the price of getting a ``polyhedron'' that may not quite be a polyhedron in a strict sense: it is a topological disc with corners, that we still call a polyhedron for simplicity. 

\begin{figure}
\centering
\includegraphics[width=11 cm]{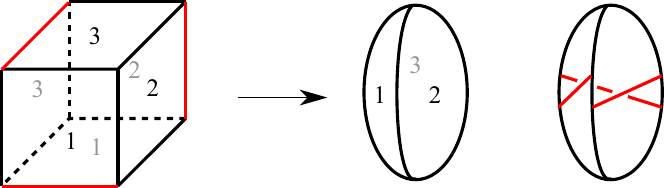}
\caption{If we smoothen the red edges we get a coloured ``polyhedron'' with only 2 vertices and 3 bigonal faces. The polyhedron is the suspension of a triangle. It is useful to keep track of the red edges in the smoothed polyhedron as shown on the right.}
\label{smoothen:fig}
\end{figure}

For each of the 8 possible choices of Dehn fillings on $P^3$ we get a coloured polyhedron, possibly after smoothing some red edges separating faces sharing the same colour. We can now apply the colouring procedure to the resulting polyhedron, and this produces a closed 3-manifold $X^3$. By construction $X^3$ is obtained from $M^3$ topologically by adding some red circles $L$ (the copies of the red edges), hence $X^3$ is indeed a Dehn filling of $M^3$, and by taking track of the red edges we can identify the link $L\subset X^3$ whose complement is $M^3$.

Summing up, we have 8 choices of Dehn fillings for $P^3$, and each will give a Dehn filling $X^3$ of $M^3$ that is also described from a coloured polyhedron. Let us analyze the two cubic Dehn fillings of Figure \ref{filledP3:fig}. We now show that these give $X^3 =T^3$ and $X^3 = S^3$ respectively.

In the central cube of Figure \ref{filledP3:fig}, opposite faces share the same colour. This is a standard configuration studied for instance in \cite{IMM}, and it indeed produces a flat 3-torus $X^3=T^3$ tessellated into 8 such cubes. The red edges will form a link $L$ that consists of three pairwise disjoint geodesics (each made of two red edges) in the three different coordinate directions. The hyperbolic manifold $M^3$ is homeomorphic to $T^3 \setminus L$ by construction. It is well known that such a complement is homeomorphic to the Borromean ring complement, and we will prove it directly in the next lines.

In the right cube of Figure \ref{filledP3:fig}-(right) some smoothing is needed because there are adjacent faces sharing the same colour. The smoothed Dehn filled polyhedron is shown in Figure \ref{smoothen:fig}. It has two vertices and three bigonal faces, that is it is the suspension of a triangle. It cannot be realized as a polyhedron in $\matR^3$, but most interestingly it can be realized geometrically as a \emph{spherical} right-angled polyhedron: this is the suspension of the right-angled spherical triangle, that is the polyhedron in $S^3$ delimited by three pairwise orthogonal planes.
If we perform the colouring construction, we get $S^3$ tessellated into 8 such polyhedra, and the red edges shown in Figure \ref{smoothen:fig}-(right) are easily shown to form the Borromean rings $B\subset S^3$. So indeed $M^3$ is the Borromean rings complement.

\begin{prop} \label{B:isom:prop}
The Borromean rings $B$ in the form just described are preserved by the following 48 isometries of $S^3$
$$(x_1,x_2,x_3,x_4) \longmapsto (\pm x_{\sigma(1)}, \pm x_{\sigma(2)}, \pm x_{\sigma(3)}, {\rm sgn}(\sigma) x_{\sigma(4)})$$
where the signs $\pm$ are arbitrarly and $\sigma \in S_3$ is any permutation.
\end{prop}
\begin{proof}
The Dehn filled $P^3$ is the suspension of the right-angled triangle, and we let $x_4$ be the direction of the suspension. The sphere $S^3$ is tessellated into 8 copies of it. By construction $L$ is symmetric by reflection along the spheres $x_1=0, x_2=0$ and $x_3=0$.

Moreover, for every $\sigma \in S_3$ there is an isometry of the Dehn filled $P^3$, \emph{i.e.}~the suspension of the right-angled triangle of Figure \ref{smoothen:fig}-(right), that acts like $\sigma$ on the coloured faces and preserves the red edges. This isometry inverts the two vertices if ${\rm sgn}(\sigma) = -1$. It extends to an isometry of $S^3$ that preserves $L$ and acts as 
$$(x_1,x_2,x_3,x_4) \longmapsto (x_{\sigma(1)}, x_{\sigma(2)}, x_{\sigma(3)}, {\rm sgn}(\sigma) x_{\sigma(4)}).$$
The proof is complete.
\end{proof}

\subsubsection{Double branched covering} \label{double:B:subsubsection}
We can also use $P^3$ to understand the geometry of the double branched covering $W^3$ over $S^3$ ramified over the Borromean rings. More precisely, we can use the Dehn filling construction to assign a $\pi$-orbifold Euclidean structure to the pair $(S^3,B)$. The double branched covering $W^3$ will inherit a flat structure. It is well known that $W^3$ is the Hantsche--Wendt flat 3-manifold \cite{Z}.

To do so, it suffices to apply the colouring method to the \emph{Euclidean} cube of Figure \ref{smoothen:fig}-(left) instead of the \emph{spherical} right-angled polyhedron of Figure \ref{smoothen:fig}-(right). The colouring method will produce topologically the same 3-sphere as above, but with a flat structure with cone angle $\pi$ along $B$ (adjacent faces along red edges share the same colour, hence only 2 cubes will be adjacent to the red edges instead of 4 in the other edges, thus forming a cone angle $\pi$). 


Summing up, we have discovered that (i) $M^3=S^3 \setminus B$, (ii) $M^3 = T^3 \setminus L$, (iii) the double branched covering over $(S^3,B)$ is flat, only by Dehn filling $P^3$. Our aim is now to do something similar with the right-angled polytope $P^4 \subset \matH^4$.

\subsection{The \Ivan link} \label{4:subsection} 
We mimic the discussion of Section \ref{3:subsection} one dimension higher. We first recall the construction of the manifold $M^4$ from the polytope $P^4$ of \cite{IMM}, then we Dehn fill $P^4$, we prove that the corresponding Dehn filling of $M^4$ is $S^4$, and that the core tori form the \Ivan link $L$.

\subsubsection{The hyperbolic manifold $M^4$}
The right-angled polytope $P^4 \subset \matH^4$ was first introduced in \cite{PV, RT}. It has 10 facets, each isometric to $P^3$, five real vertices, and five ideal vertices.
The polytope $P^4$ is combinatorially dual to the 4-dimensional rectified simplex, that is the convex hull in $\matR^5$ of the 10 vertices obtained by permuting the coordinates of $(0,0,0,1,1)$. Two such vertices are connected by an edge of the rectified simplex if and only if they differ by only two coordinates. A convenient projection of the 1-skeleton taken from \cite{IMM} is shown in Figure \ref{P4:fig}.

\begin{figure}
 \begin{center}
  \includegraphics[width = 12 cm]{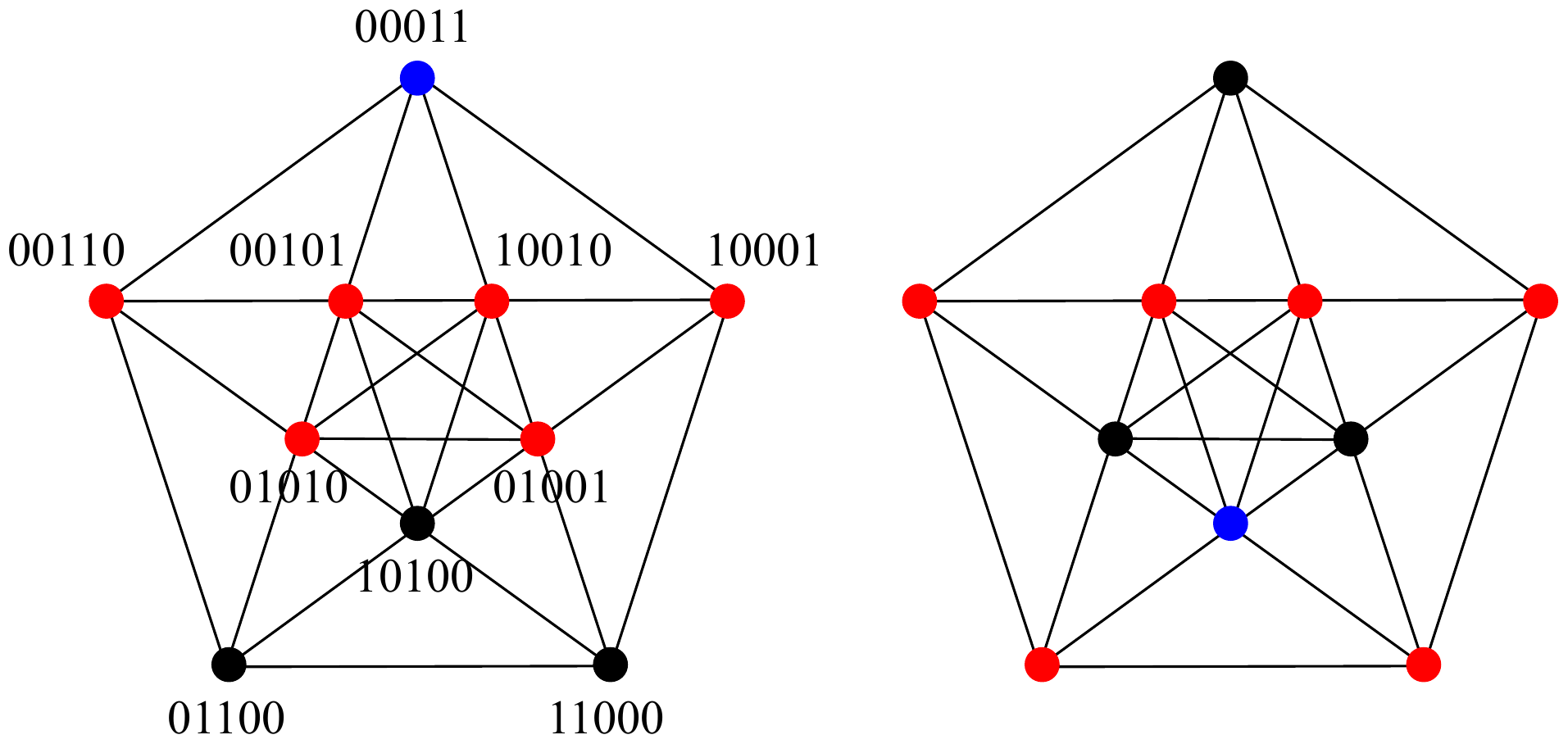}
 \end{center}
 \caption{A planar orthogonal projection of the 1-skeleton of the rectified simplex. Some edges are superposed along the projection, so two vertices that are connected by an edge on the plane projection may not be so in the rectified simplex. To clarify this ambiguity we have chosen a blue vertex and painted in red the 6 vertices adjacent to it, in two cases (all the other cases are obtained by rotation).}
 \label{P4:fig}
\end{figure}

\begin{figure}
 \begin{center}
  \includegraphics[width = 5 cm]{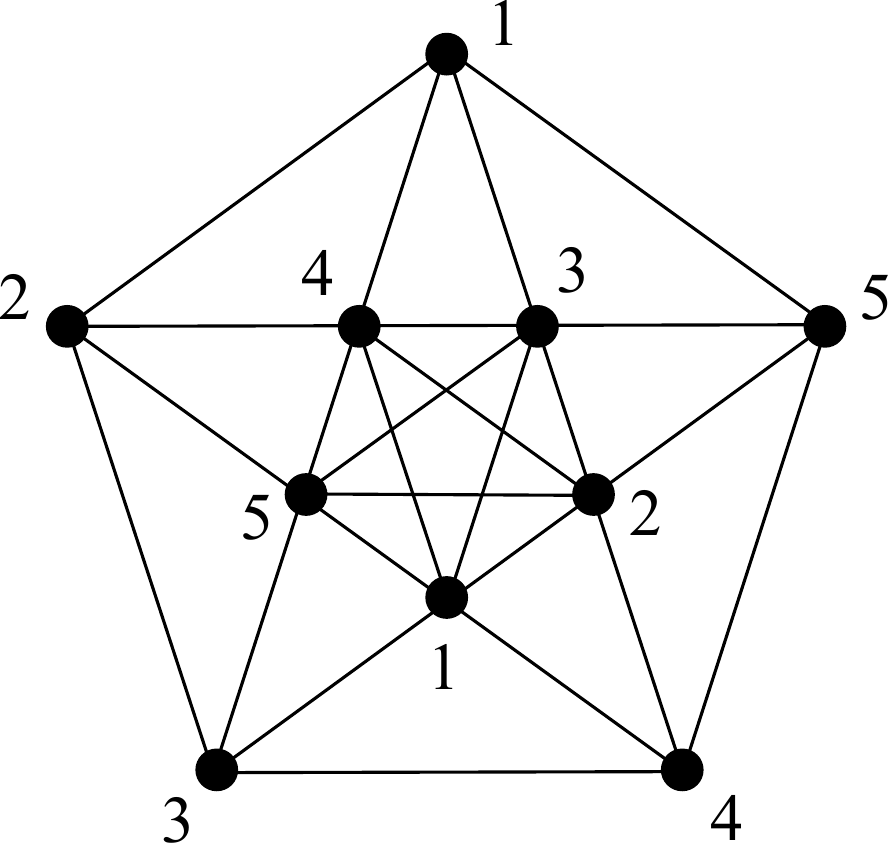}
 \end{center}
 \caption{A 5-colouring of the 1-skeleton of the rectified simplex and hence of $P^4$.}
 \label{P4_colour:fig}
\end{figure}

\begin{figure}
 \begin{center}
  \includegraphics[width = 3.5 cm]{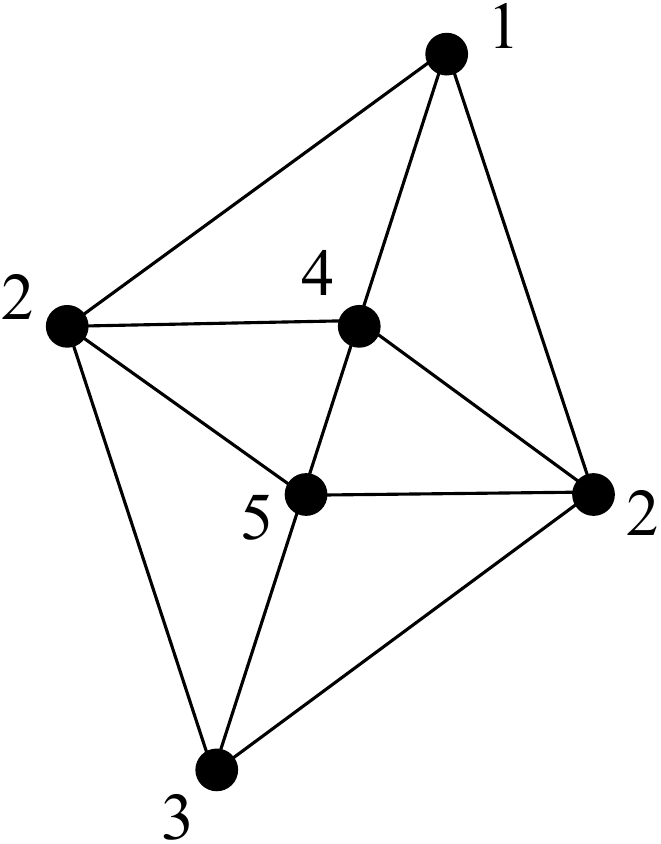}
 \end{center}
 \caption{An octahedral facet of the rectified simplex. This is a subgraph of the 1-skeleton in Figure \ref{P4_colour:fig}. Some edges are superposed. It inherits a 5-colouring where one pair of opposite vertices share the same colour 2.}
 \label{P4_facet:fig}
\end{figure}

Following \cite{IMM} we colour $P^4$ as in Figure \ref{P4_colour:fig} (colouring the vertices of the dual rectified simplex is analogous to colouring the facets of $P^4$).
This produces a hyperbolic 4-manifold $M^4$, tessellated into $2^5 = 32$ copies of $P^4$. We have $\chi(M^4) = 32/16 = 2$. As shown in \cite{IMM} the manifold $M^4$ has five cusps, one above each ideal vertex of $P^4$, and its Betti numbers are 
$$b_0(M^4) =1, \quad b_1(M^4) =5, \quad b_2(M^4) =10, \quad b_3(M^4) =4.$$

By Alexander duality, these are precisely the Betti numbers required to be a complement of five tori in $S^4$. 

\subsubsection{Dehn filling} \label{Dehn:4:subsubsection}
The rectified simplex has five octahedral and five tetrahedral facets, that are combinatorially dual to the five ideal and five real vertices of $P^4$. Figure \ref{P4_facet:fig} shows an octahedral facet, and by looking at it we note the following fact, that will be crucial for us: 

\begin{center}
\emph{The 6 vertices have 5 distinct colours, with one pair sharing the same colour.}
\end{center}

The pair sharing the same colour consists of two opposite vertices with colour 2. Since everything in Figure \ref{P4_colour:fig} is equivariant under $2\pi/5$ rotation, the other four octahedral facets of the rectified simplex also have this property. This has the following effect on $P^4$: 
\begin{center}
\emph{At every ideal vertex $v$ of $P^4$, the cube link of $v$ inherits a colouring from $P^4$ \\ with precisely one pair of opposite faces sharing the same colour}. 
\end{center}

We now Dehn fill $P^4$ following \cite{MR}. As in Figure \ref{Dehn_fill:fig}, this consists of truncating the ideal vertices, and then shrinking each new cubic face to a red square. A cube can be shrinked to a square in 3 possible ways, depending on the choice of two opposite faces: we choose the two opposite faces that share the same colour. This is a choice analogous to the one that we made for $P^3$ in Section \ref{Dehn:subsubsection}
when we picked the Dehn filling of Figure \ref{filledP3:fig}-(right). The Dehn filled polytope $\bar P^4$ is obtained from $P^4$ by substituting 5 ideal vertices with 5 red squares.

Similarly to what we have seen in Figure \ref{smoothen:fig}, two facets of $P^4$ that share the same colour are opposite with respect to some ideal vertex $v$ and therefore become incident in $\bar P^4$ along a new red square ridge: therefore in order to get an honest colouring we must smoothen and glue each such pair to a single facet. Let $\Delta^4$ be the polytope that we obtain from $\bar P^4$ by smoothing the red squares. Recall that \emph{a priori} the ``polytope'' $\Delta^4$ is only a disc with corners, not necessarily realizable as a polytope in some geometry. In fact it turns out to be an honest polytope.

\begin{prop}
The polytope $\Delta^4$ is a 4-simplex.
\end{prop}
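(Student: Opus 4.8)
The plan is to read off the combinatorial type of $\Delta^4$ by bookkeeping of faces, and then to invoke the elementary fact that a simple (abstract) $4$-polytope with exactly five facets can only be the $4$-simplex: five is the minimal number of facets, and they are then pairwise adjacent $3$-simplices. So the proof has two parts: (a) show $\Delta^4$ has exactly five facets; and (b) check that $\Delta^4$, which \emph{a priori} is only a disc with corners, actually carries the combinatorial structure of a simple polytope, so that (a) concludes.

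For (a): first, each of the five colours is used on exactly two of the ten facets of $P^4$. Indeed the $2\pi/5$ rotation is a symmetry of the coloured $P^4$, it permutes the ten facets in two orbits of size five, and on colours it must act as a $5$-cycle (otherwise fewer than five colours would occur), so each orbit carries all five colours once. Call the resulting five unordered pairs of equally coloured facets the \emph{colour-pairs}. Next, by the fundamental fact quoted above, at each ideal vertex $v$ of $P^4$ the six incident facets carry five colours, exactly one of which — call it $r(v)$ — is repeated, namely on the two facets that sit opposite in the cube link of $v$; these two facets are precisely the colour-pair of colour $r(v)$. Since the rotation acts on ideal vertices and on colours as compatible $5$-cycles, $v\mapsto r(v)$ is a bijection from the five ideal vertices onto the five colours. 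Finally, shrinking the cube link of $v$ in the prescribed same-colour direction makes exactly the two facets of the colour-pair of $r(v)$ incident along the new square ridge, and creates no other new facet-adjacency; the subsequent smoothening glues precisely that colour-pair into a single facet. Carrying this out at all five ideal vertices merges the ten facets of $P^4$ in five disjoint pairs, so $\Delta^4$ has $10-5=5$ facets. One also records, for later use, that these five facets are pairwise adjacent: the dual graph of $P^4$ is the $1$-skeleton of the rectified simplex, i.e. the line graph of $K_5$ (the complement of the Petersen graph), whose independence number is $2$, so no two colour classes can be mutually non-adjacent, and since the colouring is proper the adjacency must run across the two classes.

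For (b): the five real vertices of $P^4$ are untouched, and their vertex figures remain the tetrahedra they were, so these are simple vertices of $\Delta^4$; one must check that the truncate, shrink and smoothen recipe at each of the five ideal vertices, with the colour-dictated choices, leaves behind the local cell structure of an honest polytope and, crucially, introduces no new vertices (so that $f_0(\Delta^4)=5$) nor any other unexpected identification. Granting this, $\Delta^4$ is a simple abstract $4$-polytope with five pairwise adjacent facets, hence combinatorially the $4$-simplex. Alternatively, and perhaps closer to the pictures, one identifies each surviving facet directly: it is two copies of $P^3$ glued along a square after the three-dimensional Dehn filling and smoothening of Section \ref{3:subsection}, applied to the three ideal vertices of each $P^3$, and the same three-dimensional bookkeeping shows this is a $3$-simplex; five pairwise adjacent tetrahedral facets assemble only into a $4$-simplex.

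The part I expect to be the main obstacle is the verification in (b): making sure the five smoothenings are mutually compatible and that the abstract complex one ends up with is genuinely a polytope boundary rather than a degenerate disc with corners — concretely, that performing the local recipe simultaneously at all five ideal vertices collapses the vertices of the five new squares onto already-present vertices (keeping $f_0=5$) and produces no extra faces. Part (a) is the easy combinatorial count, and once both are in hand the identification of $\Delta^4$ with the $4$-simplex is immediate.
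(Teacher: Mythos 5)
Your main line (parts (a) and (b)) is a genuinely different route from the paper's: you count facets, prove pairwise adjacency via the independence number of the line graph of $K_5$, and then want to invoke the fact that a simple $4$-polytope with five facets is the $4$-simplex. Part (a) is correct and rather elegant — the observation that the dual graph of $P^4$ is $L(K_5)$, whose independence number is $2$, gives pairwise adjacency for free. But this route places all the weight on part (b), which you explicitly defer: you never verify that $\Delta^4$ is an honest simple polytope, that the five new squares contribute no new vertices, and that the five smoothenings are compatible. This is exactly the nontrivial content, and the paper does not take it for granted either — it says explicitly that the face-counting ``suggests'' a simplex but is not conclusive because $\Delta^4$ is a priori only a disc with corners. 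So as written, your main proof has a real gap at the step you flagged.

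The paper fills that gap by doing what you sketch as an ``alternative'': it examines the colour-paired facets $F, F'$ of $P^4$ directly. Here your alternative is close in spirit but imprecise in detail. You say each surviving facet is two copies of $P^3$ glued along a square ``after the three-dimensional Dehn filling and smoothening of Section~\ref{3:subsection}.'' That is not what happens: the Dehn filling of Section~\ref{3:subsection} replaces all three ideal vertices of $P^3$ by red \emph{edges}, producing after smoothening a polyhedron with two vertices and three bigonal faces — not a prism, and with no square face to glue along. What the $4$-dimensional filling of $P^4$ actually induces on a facet $F=P^3$ of colour $1$ is mixed: two of its three ideal vertices (those where $F$ is parallel to the shrink direction) are replaced by red edges, and the third (where $F$ is perpendicular to the shrink direction, hence one of the two facets being identified) is replaced by a red \emph{square}. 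Smoothening the two red edges turns $F$ into a triangular prism with the red square as a base, and gluing $F$ to $F'$ along that common square gives a tetrahedron whose four vertices are exactly the real vertices of $P^4$ adjacent to a facet of colour $1$. That concrete identification of each facet as a tetrahedron, with vertex set among the five real vertices of $P^4$, is what makes the ``five vertices, five facets'' count rigorous and closes the gap in your part (b). I would keep your part (a) — it is a cleaner way to see the facet count and pairwise adjacency than the paper gives — and replace the hand-waving in (b) and the imprecise alternative with the prism-gluing computation.
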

\begin{proof}
The polytope $P^4$ has five real vertices, five ideal ones, and ten facets. The Dehn filling kills the ideal vertices and then the smoothing glues the facets in pair, so $\Delta^4$ has five vertices and five facets: this suggests that it should be a 4-simplex, but we are yet not totally sure since $\Delta^4$ is \emph{a priori} only a disc with corners and not necessarily a polytope in some strict sense.

\begin{figure}
 \begin{center}
  \includegraphics[width = 11 cm]{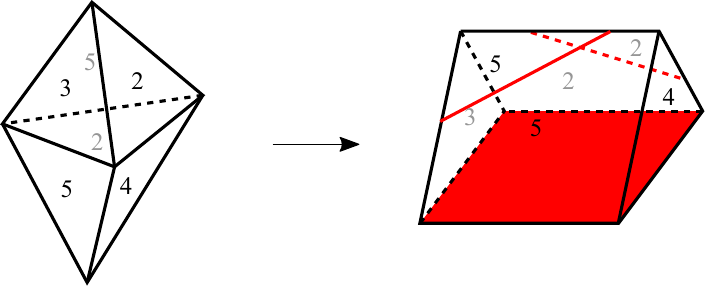}
 \end{center}
 \caption{The effect of the Dehn filling on the facet of $P^4$ dual to the vertex $(0,0,0,1,1)$. The three ideal vertices are replaced with two red edges and one red square. The colours on the back faces are drawn in gray.}
 \label{filledP4:fig}
\end{figure}

We examine the effect of Dehn filling on the facet $F$ that is dual to the vertex $(0,0,0,1,1)$ in the rectified simplex: this is the top vertex in Figure \ref{P4:fig}. By looking at the figure we see that $F$ is coloured (with the colours induced by the adjacent facets) as in Figure \ref{filledP4:fig}-(left). Recall that the Dehn filling of $P^4$ replaces each ideal vertex with an ideal square connecting the two opposite facets with equal colours. The effect of the Dehn filling on $F$ is shown in Figure \ref{filledP4:fig}: the two back ideal vertices of $P^3$ are adjacent to two opposite faces of $F$ with the same colour and are hence replaced with a red edge that separates these two faces; the front ideal vertex is adjacent to four faces with distinct colours and is hence replaced by a square. In Figure \ref{filledP4:fig}-(right) we also  smoothed the two red edges, and we get a triangular prism.

\begin{figure}
\centering
\includegraphics[width=12 cm]{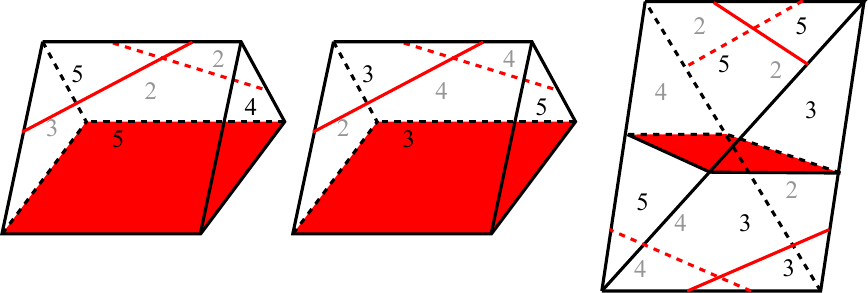}
\caption{The two facets $F,F'$ of $P^4$ coloured with 1 become two triangular prisms after Dehn filling, intersecting in their common red face (left and centre). After smoothing the two prisms glue to a single tetrahedron (right).}
\label{prisms:fig}
\end{figure}

The facet $F$ is coloured with 1. If we examine the other facet $F'$ of $P^4$ coloured with 1 we see an analogous picture where the Dehn filling transforms $F'$ into a prism as in Figure \ref{prisms:fig}-(centre). The two facets $F$ and $F'$ with colour 1 are therefore transformed into the two prisms shown in Figure \ref{prisms:fig}-(left and centre). The two prisms intersect in their common red face. The smoothing glues the two facets as shown in Figure \ref{prisms:fig}-(right) and the final result is a tetrahedron. The four vertices of the tetrahedron are those that were adjacent in $P^4$ to some facet with colour 1.

Everything is equivariant under $2\pi/5$ rotations of the graph shown in Figure \ref{P4:fig}, so every pair of facets of $P^4$ with the same colour $i\in \{1,\ldots,5\}$ will glue to a tetrahedron in the final polytope $\Delta^4$ whose four vertices are those that were adjacent to a $i$-coloured facet in $P^4$. We finally deduce that $\Delta^4$ is a simplex.
\end{proof}

Summing up, by Dehn filling the coloured $P^4$ we get the coloured simplex $\Delta^4$, that we interpret as the spherical right-angled simplex 
$$\Delta^4 = S^4 \cap \{x_1,\ldots, x_5 \geq 0\}$$
whose $i$-th facet $\Delta^4 \cap \{x_i = 0\}$ is coloured with $i$. The colouring of $\Delta^4$ gives rise to $S^4$, decomposed into $2^5 = 32$ copies of $\Delta^4$ in the standard way. Therefore we have obtained $S^4$ as a Dehn filling of $M^4$, as promised.

\subsubsection{The link of tori} \label{link:subsusection}
We now determine the link $L'\subset S^4$ formed by the core tori of the Dehn filling, and show that it is isotopic to the \Ivan link $L\subset S^4$. These core tori are obtained as the union of all the red squares that we inserted in place of the ideal vertices in our Dehn filling process. The $i$-th facet of $\Delta^4$ contains a middle red square as in Figure \ref{prisms:fig}. The red square also inherits a colouring of its edges (from the faces that contain them), with 4 different colours. All the copies of this red squares form a torus tessellated into $2^4 = 16$ red squares, entirely contained in the $i$-th coordinate 3-sphere $x_i=0$. 

We can visualize the intersection $L'\cap \{x_1=0\}$ by examining the first facet of $\Delta^4$, which is a right-angled spherical tetrahedron: this intersects $L'$ as in Figure \ref{prisms:fig}-(right), depicted more geometrically in Figure \ref{spherical:fig}-(left). By mirroring it along three of its faces we get a disc as in Figure \ref{spherical:fig}-(right), and by mirroring it along its boundary sphere we get the picture shown in Figure \ref{torus_all:fig}, as required.

\begin{figure}
\centering
\includegraphics[width=11 cm]{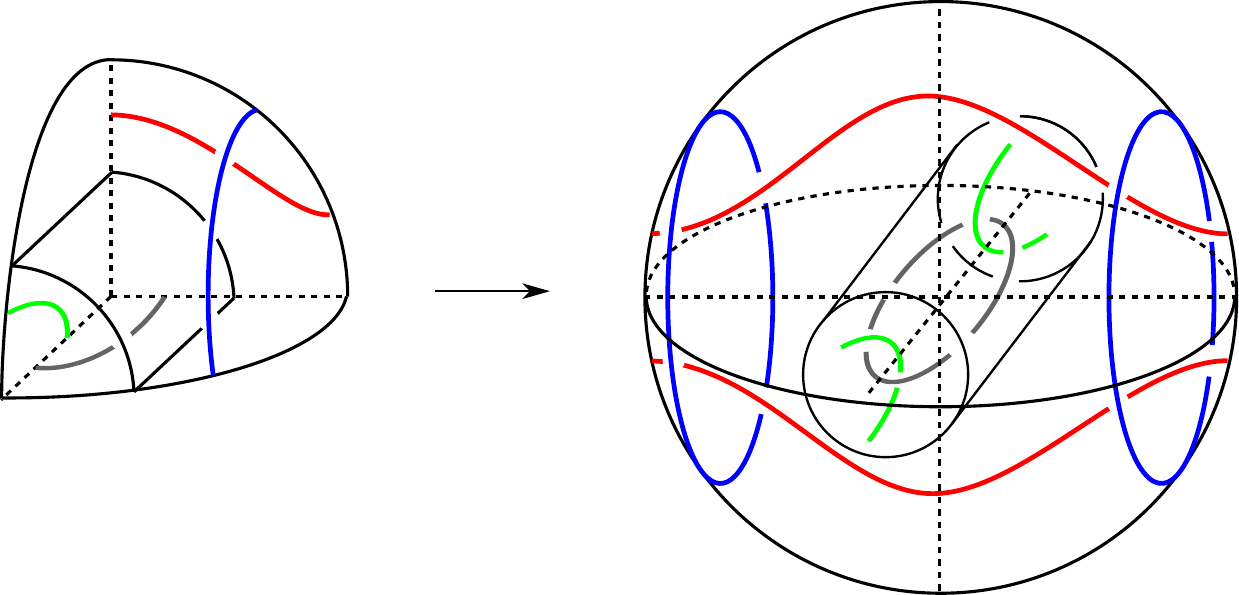}
\caption{The first face of $\Delta^4$ is a right-angled spherical tetrahedron (left). The four red edges and the middle red square from Figure \ref{prisms:fig}-(right) are drawn here as curved objects with different colours.
We mirror the spherical tetrahedron along three of its faces and get a 3-disc (right). The whole 3-sphere $x_1=0$ is obtained by mirroring again the 3-disc along its boundary.}
\label{spherical:fig}
\end{figure}

The right-angled spherical simplex $\Delta^4$ has vertices $e_1,\ldots, e_5$. We remove the vertex $e_1$ and use the coordinate $x_1$ as time: the intersection of $\Delta^4$ with the slice $x_1=t\in [0,1)$ is a tetrahedron, which varies only by rescaling. Therefore we can see the intersection of $L$ with $\Delta^4$ as a film inside a tetrahedron. 

By analysing the intersection of $L$ with the other facets of $\Delta^4$ we check that the film is as in Figure \ref{film:fig}. For instance the facet $x_5=0$ is shown in Figure \ref{film2:fig}: the intersection of the red quadrilateral with the plane $x_1=t$ is a film in the triangle with vertices $e_2,e_3,e_4$ as in Figure \ref{film:fig}.
By doubling the film of Figure \ref{film:fig} along the faces we get the film of Figure \ref{tori:fig}. Therefore $L'$ is isotopic to the \Ivan link $L$, as promised.

\begin{figure}
\centering
\vspace{.2 cm}
\labellist
\pinlabel $e_2$ at 0 -10
\pinlabel $e_5$ at 115 -10
\pinlabel $e_4$ at 140 140
\pinlabel $e_3$ at 10 140
\pinlabel $t=0$ at 50 -10
\pinlabel $t=0.4$ at 200 -10
\pinlabel $t=0.6$ at 345 -10
\pinlabel $t=0.8$ at 490 -10
\endlabellist
\includegraphics[width=12 cm]{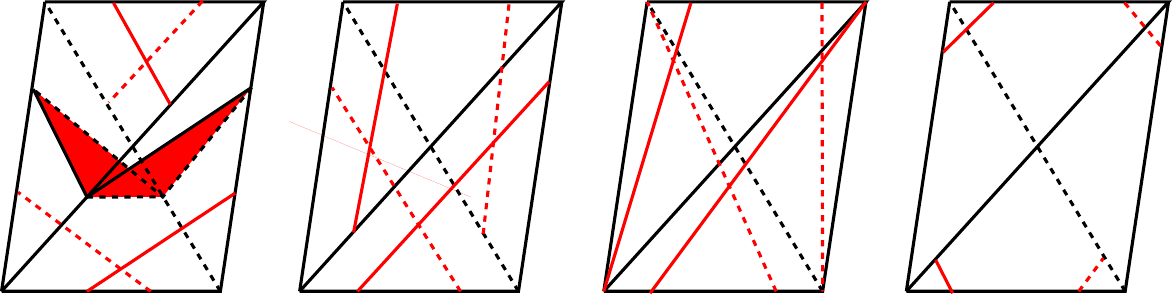}
\vspace{.2 cm}
\caption{We can visualize the intersection of the link $L$ with the 4-simplex $\Delta^4$ as a film on a tetrahedron.}
\label{film:fig}
\end{figure}

\begin{figure}
\centering
\labellist
\pinlabel $e_2$ at 40 -10
\pinlabel $e_4$ at 190 140
\pinlabel $e_3$ at 50 140
\pinlabel $e_1$ at -10 220
\endlabellist
\includegraphics[width=3 cm]{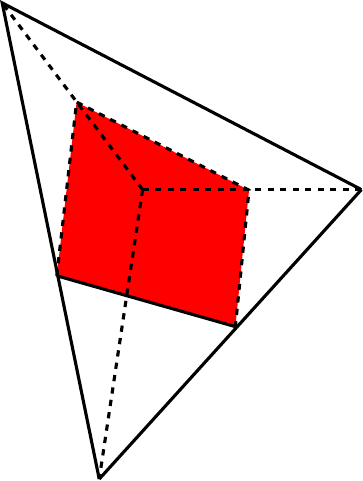}
\vspace{.2 cm}
\caption{The red quadrilateral in the face $x_5=0$ of $\Delta$.}
\label{film2:fig}
\end{figure}



\section{Geometric properties of $L$} \label{geometric:section}
We prove here the points (3), (4) and (9) of Theorem \ref{main:teo}.

\subsection{Symmetries} \label{symmetries:subsection}
We proved in Section \ref{link:subsusection} that we can isotope the \Ivan link $L$ to the link formed by the red core tori of the Dehn filling. After this isotopy we have
$$L = T_1 \sqcup T_2 \sqcup T_3 \sqcup T_4 \sqcup T_5$$
where $T_i$ is the torus that contains the red square in the $i$-th facet of $\Delta^4$. The torus $T_i$ is contained in the 3-sphere $x_i=0$.

\begin{prop}
The isometries
\begin{equation} \label{isom:eqn}
(x_1, x_2, x_3, x_4, x_5) \longmapsto (\pm x_{\sigma(1)}, \pm x_{\sigma(2)}, \pm x_{\sigma(3)}, \pm x_{\sigma(4)}, \pm x_{\sigma(5)})
\end{equation}
where $\sigma\in S_5$ is any permutation contained in the order 20 group $H$ generated by the cycles $(12345)$ and $(2354)$, and signs $\pm$ are arbitrary,
preserve $L$ and restrict to an isometry of the hyperbolic manifold $S^4 \setminus L$.
\end{prop}
\begin{proof}
We first note that any map $(x_1,\ldots,x_5) \mapsto (\pm x_1, \ldots, \pm x_5)$ preserves $L$ because $L$ is obtained by mirroring its intersection with the right-angled simplex $\Delta^4 = S^4 \cap \{x_i \geq 0 \}$ and so it is symmetric by reflection along the coordinate hyperplanes by construction. 

Then we can easily check that $P^4$ has a couple of isometries that preserve the colouring (as a partition of its faces) and act on the colours as the cycles $\sigma_1 = (12345)$ and $\sigma_2= (2354)$. The first is just a rotation of Figure \ref{P4_colour:fig}, the second is less evident from the picture (we have found it using the code available in \cite{code}) and can be checked by looking at the adjacencies between vertices: it sends every vertex in the exterior pentagon to one in the interior one (and viceversa). The two colouring-preserving isometries of $P^4$ extend to isometries of $M$, and also to its Dehn filling: here $\sigma_1, \sigma_2$ act on $\Delta^4$ by permuting its vertices, and preserving its intersection with $L$, and extend to isometries of $S^4$ that preserve $L$. 
\end{proof}

The isometry \eqref{isom:eqn} sends $T_i$ to $T_{\sigma(i)}$ for every $i=1,\ldots, 5$. Let $G$ the group of $20\times 32 = 640$ isometries \eqref{isom:eqn}. Since $H$ acts $k$-transitively on $\{1,2,3,4,5\}$, the group $G$ acts $k$-transitively on the tori $T_i$. We have proved Theorem \ref{main:teo}-(3).

\subsection{The double branched covering} \label{branched:subsection}
We take inspiration from the Borromean rings discussed in Section \ref{double:B:subsubsection} to study the double branched covering of $S^4$ along $L$. The crucial fact is the following.

\begin{figure}
\centering
\includegraphics[width=9 cm]{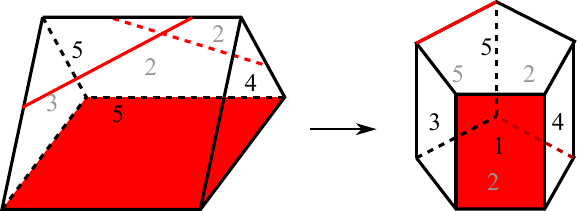}
\caption{These two polyhedra are combinatorially equivalent.}
\label{pentagons:fig}
\end{figure}

\begin{prop} \label{penta:prop}
The polytope $\bar P^4$ obtained in Section \ref{Dehn:4:subsubsection} by Dehn filling $P^4$ is the product of two pentagons. 
\end{prop}
\begin{proof}
We first note that $\bar P^4$ has 10 facets (those of $P^4$) and 25 vertices (the 5 real vertices of $P^4$, plus $5\times 4$ new vertices arising from the transformation of 5 ideal vertices into 5 squares), like the product ot two pentagons. 

We know from Figure \ref{filledP4:fig} that the Dehn filling transforms the facet of $P^4$ dual to the vertex $(0,0,0,1,1)$ into the polyhedron shown in Figure \ref{pentagons:fig}-(left). This polyhedron (considered without smoothing its red edges) is in fact combinatorially equivalent to a pentagonal prism, see Figure \ref{pentagons:fig}-(right). This holds for all the facets of $P^4$. Therefore $\bar P^4$ has 10 facets, and each facet is a prism with pentagonal basis.

The adjacency graph of the facets of $\bar P^4$ is obtained from that of $P^4$ shown in Figure \ref{P4_colour:fig} by adding an edge joining every pair of facets with the same colour. A vertex that lies in the exterior pentagon of the figure is thus connected to all the vertices in the interior pentagon, and viceversa. This is the adjacency graph of a product of two pentagons. 
\end{proof}

We can represent $\bar P^4$ geometrically as a product of two \emph{right-angled hyperbolic pentagons}, thus as a \emph{right-angled polytope} in $\matH^2 \times \matH^2$. The same reasoning of Section \ref{double:B:subsubsection} then shows that we may give to $S^4$ the structure of a $\matH^2 \times \matH^2$ \emph{$\pi$-orbifold} with singular locus $L$ having cone angle $\pi$. The branched double cover $W$ over $S^4$ branched over $L$ inherits the $\matH^2 \times \matH^2$ geometry, without the singular locus: it is an honest manifold with geometry $\matH^2 \times \matH^2$. This proves Theorem \ref{main:teo}-(4). 

\subsection{Five quadrics in $S^4$} 
We now note a curious fact: the \Ivan link $L\subset S^4$ may be obtained by perturbing a very simple set of \emph{five quadrics} that intersect inessentially in finitely many points.
Consider for each $i$ the quadric 
$$Q_i = \big\{x_i = 0,\ x_{i+1}^2 -x_{i+2}^2 - x_{i+3}^2 + x_{i+4}^2 = 0\big\}.$$

Indices are considered modulo 5. The quadric is a torus in the 3-sphere $x_i=0$. As $i=1,\ldots, 5$ varies we get 5 tori, which intersect pairwise in points: for instance the tori $i=1$ and $i=2$ intersect in the four points
$$\{(0,0,\pm \sqrt 2/2, 0,\pm \sqrt 2 / 2)\}.$$

As we soon see, these intersections are inessential, \emph{i.e.}~they can be destroyed by a small isotopy. Note that the quadrics are preserved by the group $G$ defined in Section \ref{symmetries:subsection}.
Figure \ref{quadric:fig} shows the intersection of the \Ivan link and of the quadrics with the first face of $\Delta^4$. The quadrics intersect in pairs at the midpoints of the edges, while the tori in $L$ do not intersect. The left picture is clearly obtained by perturbing the right one. The same picture holds for all the faces of the copies of $\Delta^4$ that triangulate $S^4$ and we can perturb them simultaneously. The \Ivan link $L$ is obtained by perturbing the five quadrics in order to destroy the intersections.

\begin{figure}
\centering
\vspace{.3 cm}
\labellist
\pinlabel $e_2$ at -10 -10
\pinlabel $e_5$ at 120 -10
\pinlabel $e_3$ at 10 150
\pinlabel $e_4$ at 140 150
\pinlabel $e_2$ at 175 -10
\pinlabel $e_5$ at 305 -10
\pinlabel $e_3$ at 195 150
\pinlabel $e_4$ at 325 150
\endlabellist
\includegraphics[width=8 cm]{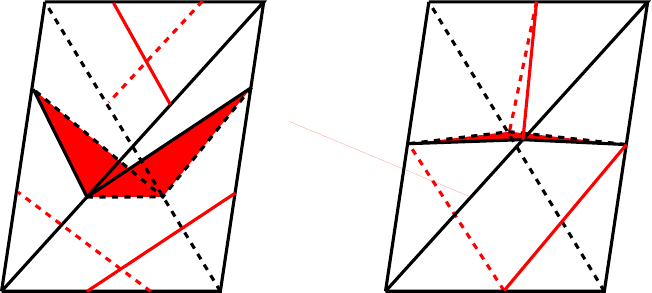}
\vspace{.2 cm}
\caption{The first face of $\Delta^4$ intersects the \Ivan link and the quadrics into red discs and arcs as shown here on the left and right, respectively (pictures are accurate only up to isotopy). The left picture is obtained by perturbing the right one in order to destroy the intersections between discs and arcs.}
\label{quadric:fig}
\end{figure}

\subsection{Perfect position} \label{perfect:subsection}
We prove that $L\subset S^4$ can be isotoped to be in perfect position with respect to a height function. As a height function on $S^4$, we consider the diagonal map
$$f(x_1,x_2,x_3,x_4,x_5) = x_1+x_2+x_3+x_4+x_5.$$

We have just shown that each torus $T_i$ in $L$ is obtained by perturbing the quadric $Q_i$.
We can check easily that $f$ restricts to a perfect Morse function on $Q_i$. Its restriction on $Q_1$ has one minimum at $(0, -1/2, -1/2, -1/2,-1/2)$, two saddle points at
$(0, \pm 1/2, \pm 1/2, \mp 1/2, \mp 1/2)$, and one maximum at $(0,1/2,1/2,1/2,1/2)$, and the singular points for $Q_i$ are analogous. Since the perturbation to pass from $Q_i$ to $T_i$ is performed away from these points, we can adjust it so that the restriction of $f$ to $T_i$ is also perfect Morse for all $i$. This proves Theorem \ref{main:teo}-(9).

\section{The fundamental group} \label{fundamental:section}

We prove here the points (5), (6), and (8) of Theorem \ref{main:teo}. We will then use the information collected on the Alexander ideal to prove Theorem \ref{Betti:teo}.

\subsection{The fundamental group} \label{pi:subsection}
We determine a presentation for the fundamental group $\pi_1(M)$. To this purpose we use Bestvina -- Brady theory \cite{BB} to construct a convenient function $f\colon C \to \matR$ on a cube complex $C$ homotopy equivalent to $M$. The map $f$ will look like a Morse function and we will read a presentation from it.

The manifold $M$ decomposes into copies $P^4_v$ of $P^4$ parametrised by $v \in \matZ_2^5$. The dual cubulation $C$ of the tessellation of $M$ into $P^4_v$ has $2^5$ vertices, dual to the polytopes $P^4_v$ and hence identified with the elements in $\matZ_2^5$, and then edges, squares, cubes, and hypercubes duals to the 3-faces, 2-faces, edges, and real vertices of the tessellation of $M$ into $P^4_v$. We embed $C$ in $M$ as a \emph{spine} as explained in \cite{BM, IMM}. The manifold $M$ deformation retracts onto $C$, the complement $M\setminus C$ consists of (truncated) cusps.

\begin{figure}
 \begin{center}
  \includegraphics[width = 12 cm]{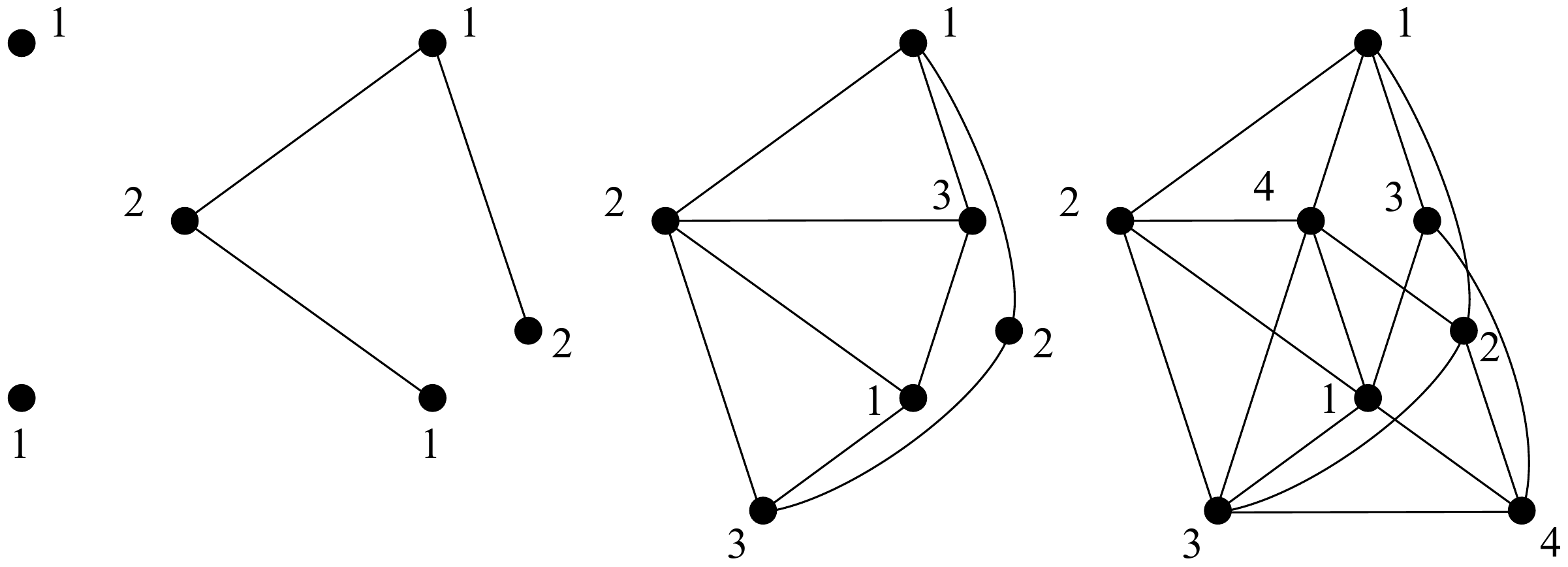}
 \end{center}
 \caption{The descending links of the vertices $v$ with $f(v)=v_1+v_2+v_3+v_4+v_5=1,2,3,4$ respectively, is the flag complex spanned by the graph shown here.}
 \label{descending_links:fig}
\end{figure}


We define a piecewise-linear function $f\colon C\to \matR$ by setting $f(v)=v_1+\cdots +v_5$ and extending $f$ linearly to all the cubes of $C$. This is a Bestvina -- Brady Morse function, and hence we may apply the theory of \cite{BB} to investigate its singular points similarly as in \cite{BM, IMM}.

The function $f$ has a unique minimum at $0$ where $f(0)=0$ and a unique maximum at $v=(1,1,1,1,1)$ where $f(v) = 5$. The descending link at the vertex $v\in \matZ_2^5$ is the subcomplex of the rectified simplex generated by the vertices whose colour $i$ is such that $v_i=1$. Up to isomorphism, the descending link depends only on the number $f(v) = v_1+\cdots +v_5 \in\{0,\ldots, 5\}$ of such colours. The cases with $f(v)=1,2,3,4$ are shown in Figure \ref{descending_links:fig}. The descending link of a vertex $v$ with $f(v)=1,2,3,4,5$ is homotopic respectively to two points, a point, a circle, a point, and a bouquet of four spheres. 

We are interested in the sublevel sets $f_t = f^{-1}[0,t]$. The sublevel set $f_t$ is contractible for $t<1$, and every time we cross a critical point $t=1,2,3,4,5$ it changes by coning the descending links of the vertices $v$ with $f(v)=t$. Note that there are 5, 10, 10, 5, 1 vertices $v$ with $f(v)=1,2,3,4,5$. The homotopy type of $f_t$ changes by adding five 1-cells at $t=1$, ten 2-cells at $t=3$, and four 3-cells at $t=5$. In particular $\pi_1(C)$ has a presentation with five generators and ten relators. Note that all these numbers are the minimum required by homology since $b_1(C) = 5, b_2(C)=10, b_3(C)=4$. (The spine $C$ is homotopically equivalent to $M$.)

\begin{figure}
 \begin{center}
  \includegraphics[width = 5.5 cm]{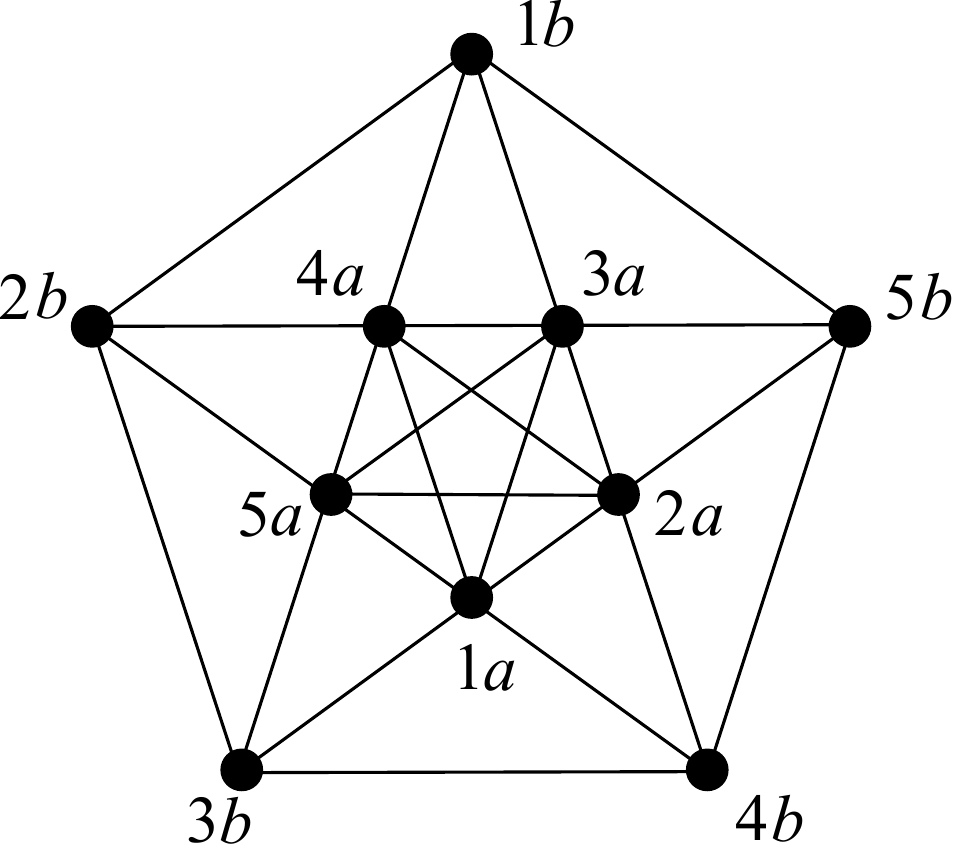}
 \end{center}
 \caption{A convenient labeling of the facets of $P^4$.}
 \label{P4_labels:fig}
\end{figure}

We now analyse carefully the cubulation $C$ and the function $f$ to identify generators and relators for $\pi_1(C)$. It is convenient to label the 10 facets of $P^4$ as in Figure \ref{P4_labels:fig} with the symbols $1a, 1b, \ldots, 5a, 5b$. Every edge of $C$ is dual to some facet of $P^4$ and is hence also assigned that label. All the edges of $C$ are thus labeled; note that opposite edges along a square of $C$ receive the same label, while the 10 edges adjacent to the same vertex $v$ of $C$ are all assigned different labels (the link of $v$ is as in Figure \ref{P4_labels:fig}).

Every string of labels like $1a\,3a\,1a\,3a$ determines a path in the 1-skeleton of $C$, that starts at the base vertex $0$ and ends at some vertex $v$. The path is obtained by starting at $0$, walking through the edge $1a$ and getting to some vertex $v_1$, then walking through the unique edge $3a$ incident to $v_1$ to reach some other vertex $v_2$, and so on. If the path is closed, it determines an element of $\pi_1(M)$. A closed path consisting of two repeated labels like $1a\,1a$ or $3b\,3b$ is homotopically trivial. The paths
$$1a\,1b, \quad 2a\,2b, \quad 3a\,3b,\quad 4a\,4b, \quad 5a\,5b$$
are closed and determine the 5 generators of $\pi_1(M)$ produced by crossing the time $t=1$, see Figure \ref{descending_links:fig}-(left). We now identify the ten relators produced at $t=3$.

When we pick two labels of two adjacent facets, like $1a$ and $3a$, the closed path $1a\,3a\,1a\,3a\,$ bounds a square in $C$ and is hence homotopically trivial. If we pick two non-adjacent facets, like $1b$ and $3b$, the closed path $1b\,3b\,1b\,3b$ does not bound a square and is more interesting. It turns out that the element $1b\,3b\,1b\,3b$ in $\pi_1(C)$ is the commutator of the inverted generators $1b\,1a$ and $3b\,3a$. Here is the proof:
$$1b\,1a\,3b\,3a\,1a\,1b\,3a\,3b \sim 
1b\,3b\,1a\, 1a\, 3a\,1b\,3a\,3b \sim 
1b\,3b\,1b\,3a\,3a\,3b \sim 
1b\,3b\,1b\,3b.
$$
We have used that $1a\, 3b \sim 3b\, 1a$, $3a\, 1a \sim 1a\, 3a$, and $3a\, 1b \sim 1b\, 3a$ since in all three cases the two facets are adjacent. Another useful observation concerning $1b\,3b\,1b\,3b$ is that it commutes with the generator $2a\, 2b$, since each of $1b, 3b$ commutes with each of $2a, 2b$. By combining these two facts we have found the relation
\begin{equation} \label{relator:eqn}
[2a\,2b, [1b\,1a, 3b\,3a]].
\end{equation}
This is in fact the relation that appears when we cross $t=3$ and we cone the
circle $1b, 2a, 3b, 2b$ in Figure \ref{descending_links:fig}. To show this, note that the circle in the descending link is homotopic to the octagon
$$2a\, 1b\, 3b\, 2a\, 2b\, 3b\, 1b\, 2b$$
based at the initial vertex $v = (0,1,1,0)$ and connecting the vertices 
$$(0,1,1,0), (0,0,1,0), (1,0,1,0), (1,0,0,0), (1,1,0,0), (1,0,0,0), (1,0,1,0), (0,0,1,0).$$
We connect $0$ and $v$ via the arc $3b \, 2a$ and we use it to homotope the octagon to
$$3b\, 1b\, 3b\, 2a\, 2b\, 3b\, 1b\, 2b\, 2a\, 3b$$
that is in turn homotopic to $[3b\,1b\,3b\,1b,2a\,2b]$.

The 10 relators that appear at $t=3$ are like \eqref{relator:eqn} and by renaming generators as
$$a = 1a\,1b, \quad b= 2a\,2b, \quad c= 3a\,3b,\quad d= 4a\,4b, \quad e= 5a\,5b$$
we find the 10 relators of Theorem \ref{main:teo}-(5), whose proof is now complete.

Recall that Figure \ref{P4_facet:fig} represents an octahedral facet of the rectified simplex, and together with its colouring a 3-torus cusp section of $M$. By looking at the figure we notice that the three curves 
$$1b\, 3b\, 1b\, 3b, \quad 4a\, 5a\, 4a\, 5a, \quad 2a\, 2b$$
represent the three generators of the fundamental group of this 3-torus section. Therefore, as stated in the introduction, the elements of the fundamental group
$$[a^{-1}, c^{-1}], \quad [d,e], \quad b$$
generate the peripheral group $\matZ^3$ of one cusp. Generators for the other cusps are obtained from these by permuting $a,b,c,d,e$ cyclically.

\subsection{The Alexander ideal} \label{Fox:subsection}
Given the nice representation of $\pi_1(M)$ with 5 generators $a,b,c,d,e$ and 10 relators, it is routine to determine the Alexander ideal using Fox calculus \cite{F}. This can be done by Sage \cite{Sage} via the following script 

{\small
\begin{verbatim}
  G.<a,b,c,d,e> = FreeGroup() 
  R = [a*c*d/c/d/a*d*c/d/c, b*d*e/d/e/b*e*d/e/d, c*e*a/e/a/c*a*e/a/e, 
     d*a*b/a/b/d*b*a/b/a, e*b*c/b/c/e*c*b/c/b] 
  S = [a/b/e*b*e/a/e/b*e*b, b/c/a*c*a/b/a/c*a*c, c/d/b*d*b/c/b/d*b*d, 
     d/e/c*e*c/d/c/e*c*e, e/a/d*a*d/e/d/a*d*a] 
  T = R + S
  H = G.quotient(T)
  R.<t1,t2,t3,t4,t5> = LaurentPolynomialRing(ZZ)
  A = H.alexander_matrix([t1,t2,t3,t4,t5])
  minors = A.minors(4)
\end{verbatim}
}

We get the $10\times 5$ matrix
\setlength\arraycolsep{-7pt}
$${\tiny 
\begin{bmatrix}
0& 0& -  (t_{4} - 1)  (t_{1} - 1) & (t_{3} - 1)  (t_{1} - 1)&
  0&\\
 0&
  0&
  0&
  -  (t_{5} - 1)  (t_{2} - 1)&
  (t_{4} - 1)  (t_{2} - 1)\\
 (t_{5} - 1)  (t_{3} - 1)&
  0&
  0&
  0&
  -  (t_{3} - 1)  (t_{1} - 1)\\
 -  (t_{4} - 1)  (t_{2} - 1)&
  (t_{4} - 1)  (t_{1} - 1)&
  0&
  0&
  0\\
 0&
  -  (t_{5} - 1)  (t_{3} - 1)&
  (t_{5} - 1)  (t_{2} - 1)&
  0&
  0\\
 0&
  -t_{2}^{-1} t_{5}^{-1} (t_{5} - 1)  (t_{1} - 1)&
  0&
  0&
  t_{2}^{-1} t_{5}^{-1}  (t_{2} - 1)  (t_{1} - 1) \quad \\
 t_{1}^{-1} t_{3}^{-1}  (t_{3} - 1)  (t_{2} - 1)&
  0&
  -t_{1}^{-1} t_{3}^{-1}  (t_{2} - 1)  (t_{1} - 1)&
  0&
  0\\
 0&
  t_{2}^{-1} t_{4}^{-1}  (t_{4} - 1)  (t_{3} - 1)&
  0&
  -t_{2}^{-1} t_{4}^{-1}  (t_{3} - 1)  (t_{2} - 1)&
  0\\
 0&
  0&
  t_{3}^{-1} t_{5}^{-1}  (t_{5} - 1)  (t_{4} - 1)&
  0&
  \!\!\! -t_{3}^{-1} t_{5}^{-1}  (t_{4} - 1)  (t_{3} - 1) \quad \quad \\
 -t_{1}^{-1} t_{4}^{-1}  (t_{5} - 1)  (t_{4} - 1)&
  0&
  0&
  t_{1}^{-1} t_{4}^{-1}  (t_{5} - 1)  (t_{1} - 1)&
  0
 \end{bmatrix}.
 }
 $$

Its 1050 minors are either null or (up to invertible factors $\pm t_i^{\pm 1}$) of type
$$(t_1-1)^{a_1}(t_2-1)^{a_2}(t_3-1)^{a_3}(t_4-1)^{a_4}(t_5-1)^{a_5}$$
with $0\leq a_i \leq 4, a_1+\cdots + a_5 = 8$, where at most one $a_i$ vanishes, and the conditions $(a_{i-1},a_i,a_{i+1}), (a_{i-2},a_i,a_{i+2}) \neq (1,0,1) $ are satisfied for all $i$. The indices should be interpreted cyclically modulo 5. These monomials are by definition the generators of the Alexander ideal $I$.

The Alexander polynomial $\Delta$ is the greatest common divisor of these monomials, which is clearly 1 because the exponent $a_i$ is allowed to vanish. Therefore $\Delta = 1$. This proves Theorem \ref{main:teo}-(6).

\subsection{Infinite cyclic coverings} \label{infinite:subsection}
We prove here Theorem \ref{Betti:teo}. We start with a general fact concerning hyperbolic 4-manifolds.

\begin{prop}
Every infinite cyclic covering $\tilde M$ of a finite-volume hyperbolic 4-manifold $M$ has $b_2(\tilde M) = \infty$.
\end{prop}
\begin{proof}
We have $M = \tilde M /\tau$ with deck automorphism $\tau$. The manifold $M_d = \tilde M /\tau^d$ covers $M$ with degree $d$. Since $\chi(M) >0$ we get $\chi(M_d) \to \infty$, hence $b_2(M_d) \to \infty$. This easily implies that $b_2(\tilde M) = \infty$, as follows.

Pick a smooth function $f\colon \tilde M \to \matR$ such that $f(\tau(x)) = f(x)+1$ (lift a map $M \to S^1$ that represents the homomorphism $\pi_1(M) \to \matZ$ induced by the covering). Set $X_d = f^{-1}(0,d)$. The covering $\pi\colon \tilde M \to M_d$ sends $X_d$ and $f^{-1}(-\varepsilon, \varepsilon)$ homeomorphically to two open subsets of $M_d$ that cover $M_d$. Using the Mayer -- Vietoris sequence of this covering together with $b_2(M_d) \to \infty$ we find that 
$$\dim \pi_*(j_*(H_2(X_d))) = \dim i_*(\pi_*(H_2(X_d))) = \dim i_*(H_2(\pi(X_d))) \to \infty$$
where $j\colon X_d \hookrightarrow \tilde M$ and $i\colon \pi(X_d) \hookrightarrow M_d$ are the inclusion maps. 
We deduce that $\dim  j_*(H_2(X_d)) \to \infty$  and hence $b_2(\tilde M) = \infty$.
\end{proof}

We now turn back to our manifold $M= S^3\setminus L$ and fix a primitive class $\phi =(x_1,x_2,x_3,x_4,x_5)\in H^1(M,\matZ)=\matZ^5$. 

We first calculate $b_3(\tilde M)$. If $x_i \neq 0$ for all $i$, then $\phi$ is represented by a Morse function with only critical points of index 2 by \cite{BM}. This lifts to a Morse function $f\colon \tilde M \to \matR$ with infinitely many critical points of index 2, and whose regular fiber is a the interior of a compact 3-manifold with boundary. This implies that $\tilde M$ has a handle decomposition with only 0-, 1-, and (infinitely many) 2-handles. Therefore $b_3(\tilde M) = 0$. Conversely, if $x_i=0$ the class $\phi$ vanishes on the $i$-th peripheral $\matZ^3$ subgroup, hence $\tilde M$ has infinitely many cusps of rank three, its sections are independent elements in $H_3(\tilde M)$, hence $b_3(\tilde M) = \infty$.

We are left with the most interesting part, that is the calculation of $b_1(\tilde M)$.
By sending $t_i$ to $t^{x_i}$ for all $i$ we get a ring homomorphism 
$$\matZ[t_1^{\pm 1}, \ldots, t_5^{\pm 1}] \to \matZ[t^{\pm 1}]$$ 
that sends
the Alexander ideal $I$ to some ideal $I_\phi \subset \matZ[t^{\pm 1}]$. Let the Laurent polynomial $\Delta_\phi\in \matZ[t^{\pm 1}]$ be the greatest common divisor of $I_\phi$. It is well-defined up to multiplication with $\pm t^k$, and we let its \emph{degree} be the difference between its highest and lowest exponent, or $\infty$ if $\Delta_\phi=0$. 

Let $\tilde M$ be the infinite cyclic covering associated to $\ker \phi$. The following equality is due to Milnor \cite{Mil, McM}:
$$b_1(\tilde M) = \deg \Delta_\phi.$$
The generators of the Alexander ideal $I$ are mapped to generators of $I_\phi$. These are
\begin{equation} \label{mono:eqn}
(t^{x_1}-1)^{a_1}(t^{x_2}-1)^{a_2}(t^{x_3}-1)^{a_3}(t^{x_4}-1)^{a_4}(t^{x_5}-1)^{a_5}
\end{equation}
with $0\leq a_i \leq 4, a_1+\cdots + a_5 = 8$, such that at most one $a_i$ vanishes, and the conditions $(a_{i-1},a_i,a_{i+1}), (a_{i-2},a_i,a_{i+2}) \neq (1,0,1) $ are satisfied for all $i$. 

Suppose that $x_i\neq 0$ for all $i$. The given generators are therefore all nontrivial.
Each $t^{x_i}-1$ decomposes into (irreducible) cyclotomic polynomials $\Phi_d(t)$ as
$$t^{x_i}-1 = \prod_{d|x_i} \Phi_d(t).$$
This is true also for negative $x_i$ up to multiplying with
$-t^{-x_i}$. The product is on all positive divisors $d>0$ of $x_i$.
Since $a_1+\cdots +a_5=8$ and $\Phi_1(t) = t-1$, each generator (\ref{mono:eqn}) decomposes as 
\begin{equation} \label{prod:eqn}
\prod_{i=1,\ldots, 5}\, \prod_{d|x_i} \Phi_{d}(t)^{a_i} = 
(t-1)^8 \prod_{i=1,\ldots, 5} \,\prod_{d|x_i, d>1} \Phi_{d}(t)^{a_i}.
\end{equation}
Therefore
$$\Delta(t) = (t-1)^8 q(t)$$
for some polynomial $q(t)\neq 0$. This shows that $b_1(\tilde M) = 8 + d$ where $d = \deg q(t)\geq 0$. The polynomial $q(t)$ depends on $x$. If $x_1,\ldots, x_5$ are pairwise coprime we easily see that $q(t)=1$. In this case for each irreducible factor $\Phi_d(t)$ in (\ref{prod:eqn}) there is a unique $x_i$ such that $d|x_i$, hence the factor is present if and only if $a_i>0$, and we can always find a generator with $a_i=0$ that does not contain it: therefore $q(t)=1$. On the other hand, if $d>1$ divides both $x_i$ and $x_j$, the factor $\Phi_d(t)$ will always be present in (\ref{prod:eqn}) because $a_i$ and $a_j$ are not allowed to both vanish. Summing up, we have $d=\deg q(t)=0$ if and only if the numbers $x_1,\ldots, x_5$ are pairwise coprime.
On the other hand, if we pick
$$(x_1,x_2,x_3,x_4,x_5) = (p,p,1,1,1)$$
for some prime number $p$ we discover that each generator (\ref{mono:eqn}) can written as
$$(t-1)^8 \Phi_p(t)^{a_1+a_2}$$
and since $a_1$ and $a_2$ cannot both vanish we get $q(t) = \Phi_p(t)$ and 
$$b_1(\tilde M) = 8+\deg \Phi_p(t) = 7+p.$$

It remains to consider the case $x_i=0$ for some $i$. If there is another $x_j$ that vanishes, then all generators \eqref{mono:eqn} vanish, we get $\Delta_\phi = 0$ and $b_1(\tilde M) = \infty$. If $x_j\neq 0$ for all $j\neq i$, as above we get $b_1(\tilde M) = 8 + d$ for some finite computable $d \geq 0$.

This concludes the proof of Theorem \ref{Betti:teo}.

\subsection{Dehn surgery} \label{Dehn:subsection}
We prove Theorem \ref{main:teo}-(8).
We know from Theorem \ref{main:teo}-(5) that the peripheral groups of $M$ are generated by 
$$a, \quad [c,d], \quad  [e^{-1}, b^{-1}]$$
and the triples obtained by permuting $a,b,c,d,e$ cyclically. The element $a$ is a meridian, while $[c,d]$ and $[e^{-1}, b^{-1}]$ are longitudes. If we kill the longitudes
$$[c,d], \quad [d,e], \quad [e,a], \quad [a,b], \quad [b,c]$$
we get a closed manifold $X$ whose fundamental group is manifestly $\matZ^5$. This proves Theorem \ref{main:teo}-(8).

\section{Link of tori in other closed 4-manifolds} \label{link:section}

We prove here Theorem \ref{Lagrangian:teo} and show that the complement $M=S^4\setminus L$ of the \Ivan link is indeed the double cover of the non-orientable hyperbolic 4-manifold 1011 from the Ratcliffe -- Tschantz census \cite{RT}, which was already proved to be a link complement in $S^4$ by \Ivan \cite{I, I2}.

\subsection{The real projective space} \label{RT:subsection}
By Theorem \ref{main:teo}-(3) the antipodal map of $S^4$ preserves $L\subset S^4$, hence it projects it to a link $\pi(L) \subset \matRP^4$. The complement $\matRP^4 \setminus \pi(L)$ is a non-orientable cusped hyperbolic 4-manifold doubly covered by $M$.

\begin{prop} \label{RT:prop}
The complement of $\pi(L)\subset \matRP^4$ is 
the non-orientable manifold 1011 from the Ratcliffe -- Tschantz census \cite{RT}.
\end{prop}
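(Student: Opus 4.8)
The plan is to identify the cusped hyperbolic $4$-manifold $\matH^4/\pi_1(\matRP^4 \setminus \pi(L))$ with a specific manifold in the Ratcliffe--Tschantz census by matching invariants and, ultimately, an explicit fundamental domain decomposition. First I would observe that we already have a very concrete description of $\matRP^4 \setminus \pi(L)$: since the antipodal map acts freely on $S^4$, commutes with the colouring/Dehn-filling picture, and acts on the $32$ copies of $\Delta^4$ tessellating $S^4$ by identifying the copy indexed by $v \in (\matZ/2)^5$ with the copy indexed by $v + (1,1,1,1,1)$ (because the antipodal map negates all coordinates, which is exactly the simultaneous reflection in all five colour-hyperplanes), the quotient $\matRP^4 \setminus \pi(L)$ is tessellated into $32/2 = 16$ copies of the spherical right-angled simplex $\Delta^4$, hence $M = S^4 \setminus L$ is the orientable double cover of $M' = \matRP^4 \setminus \pi(L)$. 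Equivalently, $M'$ arises from $P^4$ with the same $5$-colouring but using the index set $(\matZ/2)^5 / \langle (1,1,1,1,1)\rangle \cong (\matZ/2)^4$, so $M'$ is tessellated into $16$ copies of $P^4$ and has Euler characteristic $\chi(M') = 16/16 = 1$.

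Next I would pin down which census manifold this is by invariants. The Ratcliffe--Tschantz census consists of the cusped hyperbolic $4$-manifolds decomposing into $16$ copies of the ideal right-angled $24$-cell, equivalently (after the standard barycentric-type subdivision) into the copies of $P^4$; in particular $\chi = 1$ and the volume is $\tfrac{4}{3}\pi^2$. So $M'$ is \emph{some} manifold on that list. To single out $1011$ I would compute: (i) the number of cusps and their diffeomorphism types — from the colouring picture each ideal vertex of $P^4$ still has a cube link with one pair of opposite faces sharing a colour, and the antipodal identification acts on the five cusps; one checks it preserves each cusp (since $(1,1,1,1,1)$ fixes the colour of the distinguished pair) and acts on each $3$-torus cusp section of $M$ as an involution with quotient the flat non-orientable bundle over $S^1$ with monodromy $\matr{-1}001$, giving five such cusps; (ii) the homology $H_*(M';\matZ)$, computed directly from the $16$-fold tessellation (or from the index-$2$ transfer/Cartan--Leray sequence relating it to $H_*(M)$, whose Betti numbers $1,5,10,4$ are already known), together with the $\matZ/2$ torsion detecting non-orientability; (iii) the orientability itself — $M'$ is non-orientable because the antipodal map of $S^4$ is orientation-preserving but conjugating by a reflection in a colour-hyperplane shows the deck transformation reverses the local orientation pattern of the tessellation, or more simply because $\matRP^4$ is non-orientable and $\pi(L)$ has codimension $2$. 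I would then look up the Ratcliffe--Tschantz tables (as in \cite{RT}, and using the code of \cite{code}) and verify that $1011$ is the unique census manifold with exactly these data: non-orientable, five cusps all of the stated type, and matching homology.

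Finally, to upgrade "matching invariants" to an honest identification I would exhibit an explicit isometry, which the tables make feasible: the census manifold $1011$ is given by an explicit side-pairing of the $24$-cell tessellation, and our $M'$ is given by the explicit colouring side-pairings of $P^4$; both can be fed to the same normalisation routine (e.g. computing the canonical Epstein--Penner cell decomposition, or simply comparing the permutation representations of the side-pairings after subdividing both into the common cell $P^4$), and Mostow--Prasad rigidity guarantees that once the combinatorial gluing data agree up to symmetry the manifolds are isometric, hence diffeomorphic. The main obstacle I expect is purely bookkeeping: correctly translating the Ratcliffe--Tschantz indexing conventions (their ordering of the $24$-cell's facets and the sign conventions in the side-pairing matrices) into the colouring/Dehn-filling language used here, and making sure the non-orientability and the precise cusp types are recorded consistently on both sides; the geometry itself is forced by rigidity once the bookkeeping is done, and the fact that $M$ was already known by Ivan\v si\' c \cite{I,I2} to be the orientable double cover of census manifold $1011$ means the only genuinely new content is the identification of the \emph{quotient} $M'$ with the non-orientable $1011$ itself, which follows as soon as we know $M \to M'$ is the orientation double cover and that $1011$ is the unique non-orientable census manifold covered by $M$.
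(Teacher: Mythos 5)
Your proposal takes essentially the same route as the paper: quotient the $(\matZ/2)^5$-colouring of $P^4$ by $(1,1,1,1,1)$ (which is exactly what the antipodal map does on the tessellation indices) to get a $(\matZ/2)^4$-colouring, i.e.\ a small cover of $P^4$, and hence a Ratcliffe--Tschantz census manifold; then single out $1011$ by its cusp sections (the flat non-orientable $G$ with monodromy $\matr{-1}001$) or, as the paper also notes, by its $320$ symmetries. Two small slips worth correcting: the antipodal map on $S^4$ is orientation-\emph{reversing} (degree $(-1)^{4+1}=-1$), not orientation-preserving — your alternative argument via non-orientability of $\matRP^4$ is the right one — and the census manifolds are tessellated by one ideal $24$-cell (or $16$ copies of $P^4$), not $16$ copies of the $24$-cell; also, the final Mostow-rigidity step is superfluous, since once the $(\matZ/2)^4$-colouring of $P^4$ is written down the small cover is determined combinatorially and the census entry is read off directly.
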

\begin{proof}
The manifolds in the Ratcliffe -- Tschantz census are by construction all the \emph{small covers} of $P^4$, that is the manifold covers of the orbifold $P^4$ of smallest possible index $2^4=16$. All the small covers can be constructed by \emph{colouring} the facets of $P^4$ with vectors $v \in (\matZ/2\matZ)^4$, as explained for instance in \cite{KMT, KS}. 

We assign the vectors $e_1,e_2,e_3,e_4,e_1+e_2+e_3+e_4$ to the facets of $P^4$ coloured with 1, 2, 3, 4, 5 in  Figure \ref{P4_colour:fig}. This builds a small covering, and it yields the manifold 1011: to see this, we can either note that (i) it has 320 symmetries inherited from the 640 symmetries of $L$, or that (ii) it has 5 cusps, and all its cusp sections are the non-orientable flat manifold (denoted as G in \cite{RT}) that fibers over the circle with monodromy $\matr {-1}001$. In either case the manifold 1011 is the only one in the tables \cite{RT} with this property.
\end{proof}

We have proved the part of Theorem \ref{Lagrangian:teo} concerning $\matRP^4$. Since $M$ is the orientable double cover of this complement, we deduce the following.

\begin{cor}
The manifold $M$ is the manifold considered by Ivan\v si\' c \cite{I, I2}.
\end{cor}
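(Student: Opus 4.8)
The plan is to read this off from Proposition~\ref{RT:prop} together with elementary covering space theory. Recall that the antipodal map $\iota\colon S^4\to S^4$ acts freely on $S^4$ and preserves $L$, hence it acts freely on the connected manifold $M^4=S^4\setminus L$ (which is tessellated into $32$ copies of $P^4$, so is indeed connected) with quotient the complement $\matRP^4\setminus\pi(L)$. The projection $M^4\to\matRP^4\setminus\pi(L)$ is therefore a connected double covering, and by Proposition~\ref{RT:prop} its base is the non-orientable census manifold $1011$ of \cite{RT}.

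Next I would pin down which double cover of $1011$ this is. The space $M^4$ is orientable, being an open subset of the orientable manifold $S^4$, whereas $1011$ is non-orientable. A connected non-orientable manifold $Y$ has a unique connected orientation double cover, namely the one corresponding to $\ker\big(w_1\colon\pi_1(Y)\to\matZ/2\matZ\big)$; and any connected orientable double cover of $Y$ must agree with it, since the corresponding index-two subgroup of $\pi_1(Y)$ is contained in $\ker w_1$ and hence equal to it. Thus $M^4$ is precisely the orientation double cover of $1011$.

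Finally I would invoke Ivan\v si\'c's results \cite{I, I2}: he proved that the orientation double cover of the Ratcliffe--Tschantz manifold $1011$ is a complement of five tori in $S^4$, and this orientation double cover is the cusped hyperbolic $4$-manifold whose link-complement structure he studied. Since $M^4$ is exactly that orientation double cover, the two manifolds coincide, as claimed. The only delicate points are bookkeeping ones — matching our labelling of the tables in \cite{RT} with the conventions of \cite{I, I2}, and keeping track of connectedness and orientability of the covers involved. None of these is a genuine obstacle once Proposition~\ref{RT:prop} is in hand; the substantive content of the identification has already been carried out in Sections~\ref{3:subsection} and~\ref{4:subsection}, where $M^4$ was exhibited explicitly as $S^4\setminus L$.
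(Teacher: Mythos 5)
Your proposal is correct and matches the route the paper intends: the corollary is stated immediately after Proposition~\ref{RT:prop} with no separate proof precisely because the intended argument is that $M^4$ is the orientation double cover of the census manifold $1011$ (via the antipodal quotient), and Ivan\v si\'c's manifold is that same orientation double cover. You have merely spelled out the standard covering-space bookkeeping (uniqueness of the connected orientable double cover of a non-orientable manifold) that the paper leaves implicit.
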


\subsection{Lagrangian tori in products of surfaces} \label{Lagrangian:subsection}
We prove here the part of Theorem \ref{Lagrangian:teo} that concerns the product of surfaces.

\begin{figure}
\centering
\includegraphics[width=7 cm]{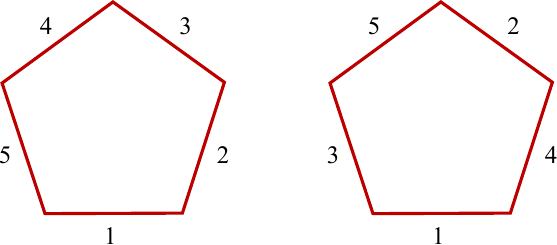}
\caption{The polytope $\bar P^4$ is the product of two right-angled pentagons in $\matH^2$. The 5 red squares are the products of edges with the same labels.}
\label{pentagons_product:fig}
\end{figure}

Coherently with Section \ref{branched:subsection}, we denote with $\bar P^4\subset \matH^2 \times \matH^2$ the product of two right-angled pentagons as in Figure \ref{pentagons_product:fig}. A $k$-face of $\bar P^4$ is the product of a $h$-face of the left pentagon with a $(k-h)$-face of the right one. 
Each edge in Figure \ref{pentagons_product:fig} has a colour in $1,\ldots, 5$. We colour the 5 square faces in $\bar P^4$ that are products of edges with the same colour in \emph{red}. These 5 red squares are pairwise disjoint. 

If we assign a colouring with palette $1,\ldots,k$ to the right-angled polytope $\bar P^4$, we get a manifold $N^4$ with geometry $\matH^2 \times \matH^2$ in the usual way. The union of all the red squares will form some link $L\subset N^4$ of totally geodesic tori.

\begin{prop}
The complement $N^4 \setminus L$ has a hyperbolic structure.
\end{prop}
\begin{proof}
The polytope $\bar P^4 \subset \matH^2 \times\matH^2$ with its red squares was obtained in Section \ref{branched:subsection} as a Dehn filling of $P^4$. Therefore $N^4$ is a Dehn filling of the hyperbolic manifold $N^4 \setminus L$ obtained from $P^4$ equipped with the same colouring of $\bar P^4$. 
\end{proof}

\begin{figure}
\centering
\includegraphics[width=7 cm]{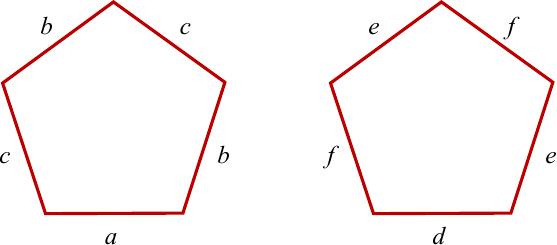}
\caption{A colouring for $\bar P^4$ that is a product of two independent colourings on the pentagons. The resulting manifold is a product of two genus-two surfaces $\Sigma \times \Sigma$.}
\label{pentagons_colour:fig}
\end{figure}

We now build an example. Every facet $F$ of $\bar P^4$ is a product of an edge $e$ of one pentagon with the other pentagon; hence the 10 facets $F$ of $P^4$ are in 1-1 correspondence with the 10 edges of the two pentagons, and we denote a colouring of the facets of $P^4$ by assigning the colours to the corresponding edges. 

Let us colour the facets of $\bar P^4$ with 6 colours as depicted in Figure \ref{pentagons_colour:fig}. We use letters instead of numbers to avoid confusion with Figure \ref{pentagons_product:fig}. Since this is a product of two independent colourings on the pentagons, the resulting manifold is a product $\Sigma \times \Sigma$ of two surfaces, each $\Sigma$ obtained by colouring the right-angled pentagon with 3 colours. The $a,b,c$ colouring produces the genus-two surface $\Sigma$ shown in Figure \ref{genus_two:fig}. The edges coloured with $a,b,c$ form 1, 3, 3 simple closed curves respectively.

\begin{figure}
\centering
\includegraphics[width=8 cm]{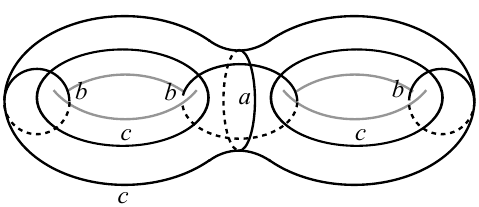}
\caption{A genus two surface obtained from the $a,b,c$ colouring of the pentagon. It is tessellated into 8 identical pentagons with sides labeled as $a,b,c,b,c$. The sides coloured with $a,b,c$ form 1, 3, 3 simple closed curves respectively, and we assign only one label for each closed curve for simplicity.}
\label{genus_two:fig}
\end{figure}

We have drawn the same picture in Figure \ref{genus_two_greek:fig} labeling the simple closed curves as $\alpha, \beta, \gamma, \delta, \eta$ according to the labeling 1, 2, 3, 4, 5 in the left pentagon of Figure \ref{pentagons_product:fig}. Recall that the five red squares in $\bar P^4$ are the products of edges in the left and right pentagons that share the same label. These form the curves 
$$\alpha \times \alpha, \quad \beta \times \gamma_i, \quad \gamma_i \times \epsilon, \quad \delta_i \times \beta, \quad \epsilon \times \delta_i.$$

The proof of Theorem \ref{Lagrangian:teo} is complete.

\subsection{The branched double cover $W$}
We now use the techniques introduced in the previous section to derive some information on the double branched cover $W$ over $S^4$ ramified along the \Ivan link $L$. 

We can calculate the Betti numbers of $W$.

\begin{figure}
\centering
\labellist
\pinlabel $e_1$ at 55 5
\pinlabel $e_1+e_6$ at 215 5
\pinlabel $e_2$ at 110 40
\pinlabel $e_5$ at 0 40
\pinlabel $e_2+e_6$ at 280 40
\pinlabel $e_5+e_6$ at 147 40
\pinlabel $e_3$ at 93 103
\pinlabel $e_4$ at 15 103
\pinlabel $e_3+e_6$ at 260 103
\pinlabel $e_4+e_6$ at 162 103
\endlabellist
\includegraphics[width=7 cm]{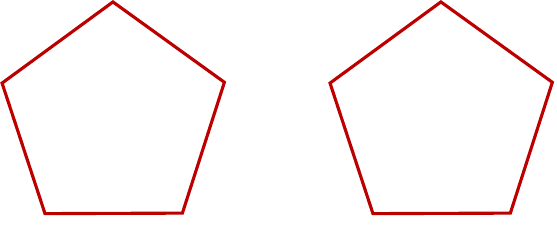}
\caption{This colouring of $\bar P^4$ produces the double branched covering $W$ over $S^4$ ramified over the \Ivan link $L$.}
\label{pentagons_uncoloured:fig}
\end{figure}

\begin{prop}
We have
$$b_0(W) = 1, \quad b_1(W) = 0, \quad b_2(W) = 2, \quad b_3(W) = 0, \quad b_4(W)=1.$$
\end{prop}
\begin{proof}
We can obtain $W$ by assigning to $\bar P^4$ the colouring in $(\matZ/2\matZ)^6$ shown in Figure \ref{pentagons_uncoloured:fig}, see \cite{KMT, KS} for more information on colourings. The manifold is tessellated into the polytopes $\bar P_v$ with $v \in (\matZ/2\matZ)^5$, and the quotient $\pi$-orbifold $(S^4,L)$ with geometry $\matH^2 \times \matH^2$ is constructed by identifying $\bar P_v$ with $\bar P_{v+e_6}$ via the identity map (this is the double branched covering).

By applying the Choi -- Park formula \cite[Theorem 4.6]{CP}, which works for any moment-angle manifold and hence for any manifold obtained by colouring some polytope, we find that the Betti numbers are as stated. One checks that the only complexes $K_\omega$ in that formulathat contribute to the Betti numbers are homeomorphic to $\emptyset, S^1, S^1, S^3$, and they give $b_0=1, b_2=2$, and $b_4=1$.
\end{proof}

\begin{cor} \label{W:not:cor}
The double branched covering $W$ is not a product of surfaces.
\end{cor}

The manifold $W$ is finitely covered by a product of surfaces. For instance, the trivial colouring of $\bar P^4$ that assigns 10 different numbers to the 10 facets produces a product of two surfaces that covers any manifold obtained from $\bar P^4$ by any colouring, including $W$.

\end{document}